\documentclass[reqno,11pt]{amsart}
\usepackage{amsthm,amsfonts,amssymb,euscript,bbm,amsmath,mathrsfs}

\usepackage{empheq}
\usepackage{latexsym}
\usepackage{enumerate}
\usepackage{array}
\usepackage{cancel}
 \usepackage{color,ulem,textcomp}
\usepackage{hyperref}
\usepackage{pdfsync}

\textwidth 16cm

\newcommand{\dis}{\displaystyle}
\newcommand{\bea}{\begin{eqnarray}}
\newcommand{\eea}{\end{eqnarray}}
\def\beaa{\begin{eqnarray*}}
\def\eeaa{\end{eqnarray*}}
\def\ba{\begin{array}}
\def\ea{\end{array}}
\def\be#1{\begin{equation} \label{#1}}
\def \eeq{\end{equation}}

\def\eps{\varepsilon}
\def\DD{{\mathcal D}}
\def\R{{\mathbb{R}}}
\def\C{{\mathbb{C}}}

\def\Z{{\mathbb{Z}}}
\def\T{{\mathbb{T}}}
 \newcommand{\W}{\mathcal W}
\newcommand{\Vc}{\mathcal V}

\renewcommand{\>}{  \rangle   }

\newtheorem{theorem}{Theorem}[section]
\newtheorem{lemma}[theorem]{Lemma}
\newtheorem{proposition}[theorem]{Proposition}
\newtheorem{corollary}[theorem]{Corollary}

\newtheorem{assumption}[theorem]{Assumption}

\setlength{\textwidth}{16cm} \setlength{\oddsidemargin}{0cm}
\setlength{\evensidemargin}{0cm}
\numberwithin{equation}{section}

\definecolor{gr}{rgb}   {0.,   0.69,   0.23 }
\definecolor{bl}{rgb}   {0.,   0.5,   1. }
\definecolor{mg}{rgb}   {0.85,  0.,    0.85}
\definecolor{yl}{rgb}   {0.8,  0.7,   0.}
\definecolor{or}{rgb}  {0.7,0.2,0.2}

\catcode`\Ž = \active\def Ž{\'e} \catcode`\ƒ = \active\def ƒ{\'E}  \catcode`\Ë = \active\def Ë{\`A}\catcode`\ = \active\def {\`e}
\catcode`\ = \active\def {\c{c}} \catcode`\ˆ = \active\def
ˆ{\`a} \catcode`\ = \active \def {\^e} \catcode`\ = \active
\def {\`u} \catcode`\™ = \active \def ™{\^o} \catcode`\ž =
\active \def ž{\^u} \catcode`\" = \active \def "{\^{\i}}
\catcode`\‰ = \active \def ‰{\^a} \catcode`\š = \active \def
š{{\"o}}

\begin{document}

\author{Beno\^it Gr\'ebert, \'Eric Paturel and Laurent Thomann}
\address{Laboratoire de Math\'ematiques J. Leray, UMR  6629 du CNRS, Universit\'e de Nantes, 
2, rue de la Houssini\`ere,
44322 Nantes Cedex 03, France}
\email{benoit.grebert@univ-nantes.fr}
\email{eric.paturel@univ-nantes.fr}
\email{laurent.thomann@univ-nantes.fr}

\subjclass[2000]{35Q55, 35B40}
\keywords{Modified Scattering, Nonlinear Schr\"odinger equation,  small divisors, normal form}
\thanks{B.G. and L.T. are partially supported   by the  grant  ``ANA\'E'' ANR-13-BS01-0010-03}

\title[Modified Scattering on $\mathbb{R}\times\mathbb{T}^d: $ the nonresonant case]{Modified scattering for the cubic Schr\"odinger equation on product spaces: the nonresonant case}


\begin{abstract}
We consider the cubic nonlinear Schr\"odinger equation   on the spatial domain $\R\times \T^d$, and we perturb it with a convolution potential.  Using recent techniques of  Hani-Pausader-Tzvetkov-Visciglia, we   prove a modified scattering result and construct modified wave operators, under generic assumptions on the potential. In particular, this enables us to prove that the Sobolev norms of small solutions of this nonresonant cubic NLS are asymptotically constant.   
\end{abstract}
\maketitle


\section{Introduction}

\subsection{Motivation and backgrounds}
In the last  years, much effort has been done   to understand the weak turbulence phenomenon  in  Hamiltonian \ nonlinear dispersive PDEs. The central question is the following: once we have proved the global well posedness of a PDE in a Sobolev space~$H^{s_0}$, we want to know whether
\begin{itemize}
\item[$(i)$]  the  solutions remain bounded for all time and in all Sobolev norms, i.e. $$\|u(t)\|_s\leq C_s \|u(0)\|_s,\ \forall s\geq s_0$$ at least for small initial conditions (a strong stability results for the origin), 
\item[$(ii)$] there exist initial conditions leading to unbounded solutions, i.e. $$\exists\, u(0) \text{ such that } \limsup_{t\to +\infty} \|u(t)\|_s=+\infty \text{ for some } s\geq s_0.$$
\end{itemize}

 The first significant result in direction~$(ii)$ is due to  Bourgain~\cite[Section~4]{Bo1996c} who showed a polynomial growth of Sobolev norms for a nonlinear wave equation in 1d with periodic boundary conditions. Later on,  Colliander-Keel-Staffilani-Takaoka-Tao (see~\cite{CKSTT}),  considered  the cubic  nonlinear Schr\"odinger equation,   on the two dimensional torus $\T^{2}=\big(\R/(2\pi\Z)\big)^{2}$
 \begin{equation} \label{NLST2} i\partial_t u +\Delta u=\vert u\vert^2u , \quad (t,x)\in \R\times  \T^{2}\end{equation}
and proved that  for any $K \geq 1$ there exists a solution $u$ and a time $T $ such that $\|u(T )\|_s \geq K\|u(0 )\|_s$. Of course, this result is weaker than the assertion~$(ii)$ but it suggests a possible unbounded behavior for some solutions. \  After that, Guardia-Kaloshin (see~\cite{GK}), improving the dynamical step, proved that the time $T$ satisfies a polynomial bound $0 < T < K^c$ for some absolute constant $c>0$. A maybe less intuitive extension is then obtained by  M. Guardia (see~\cite{G}): he proves that this "almost unbounded" behavior is not a consequence of the exact resonances in~\eqref{NLST2}, since it persists when one adds a small convolution potential $V$:
\begin{equation} \label{NLST2V} i\partial_t u +\Delta u+ V\star u=\vert u\vert^2u , \quad (t,x)\in \R\times  \T^{2}.\end{equation}
 In fact, in~\cite{CKSTT} (resp. in~\cite{GK, G}) the authors proved that the solutions of~\eqref{NLST2} (resp.~\eqref{NLST2V}) remain close to the solution of a finite dimensional (depending on $K$) resonant system and they constructed an explicit solution $v_K$ of this finite dimensional dynamical system (which also depends on $K$) satisfying $\|v_K(T )\|_s \geq K\|v_K(0 )\|_s$.\\
 However we could expect that, since the potential $V$ generically kills the exact resonances, the solutions of\;\eqref{NLST2V} would not follow the resonant dynamics. Actually in a series of paper initiated by~\cite{Bam03, BG}, Bambusi-Gr\'ebert  developed a Birkhoff normal form technic that shows that, in the context of~\eqref{NLST2V}, assertion $(i)$ is {\it almost} satisfied for a generic choice of $V$. Precisely they proved a stability result of kind $(i)$  for $t\leq C \eps^{-M}$ where $\eps=\|u_0\|_s\ll 1$ and $M$ is an arbitrary constant fixed from the beginning (see also \cite{BG2, BDGS, GIP} for developments  or~\cite{Bam07, Gre} for a simple presentation). \\
Notice that this stability result is even stronger in analytic regularity as conjectured in~\cite{Bo2004b} and proved in \cite{FG}:  if the initial datum is analytic in a strip then the solution is bounded in a strip of half width during a time of order $ \eps^{-\sigma|\ln \eps|^\beta}$ where $\eps>0$ is the initial size of the solution and $0<\beta<1$.
Surprisingly, the result in~\cite{G} shows that the resonant behavior in~\eqref{NLST2V} may coexist with these almost stability results.\

\medskip
 Let us also mention some interesting phenomena concerning the periodic Szeg\"o equation introduced by G\'erard and Grellier\;\cite{GG2}.  
Recently, in\;\cite{GGX}  they showed the alternative\;$(ii)$ for generic initial conditions, despite of an infinite number of conservation laws. Concerning the Szeg\"o equation on the real line, Pocovnicu\;\cite{Pocovnicu}  proved\;$(ii)$ by giving an explicit example.\
\medskip

More recently Hani-Pausader-Tzvetkov-Visciglia considered in~\cite{HPTV} the cubic nonlinear Schr\"odinger equation on the wave-guide manifolds $\R\times \T^d$
 \begin{equation} \label{NLST2R}
 i\partial_t u +\Delta_{\mathbb{R}\times\mathbb{T}^d} u=\vert u\vert^2u , \quad (t,x,y)\in \R\times \R\times \T^{d},
\end{equation}
so they added a direction of diffusion in the PDE. Due to the dispersion along one variable, we expect that this equation is less "turbulent" than~\eqref{NLST2}.  Actually   they proved that in the case $d=1$ the equation~\eqref{NLST2R}   satisfies the assertion~$(i)$ in the alternative above, and when  $2\leq d\leq 4$ it satisfies the assertion~$(ii)$.  \ \medskip

In this work we add a convolution potential $V$ to~\eqref{NLST2R}, i.e. we consider
 \begin{equation}\label{NLSP}  
 i\partial_t u +\Delta_{\mathbb{R}\times\mathbb{T}^d} u+V\star  u=\vert u\vert^2u , \quad (t,x,y)\in \R\times \R\times \T^{d}
 \end{equation}
 and we prove that for generic choice of the potential $V$ assertion $(i)$ holds true. So in that "less turbulent" case, the exact resonances are determinant to decide the limit dynamics: when we kill the exact resonances we turn off the weak turbulence phenomenon.  As proven in\;\cite{HPTV}, in the case $d=1$, the resonances are trivial, and this leads to $(i)$. One difficulty in the study of\;\eqref{NLST2R} and of\;\eqref{NLSP} is that the nonlinearity is long range and thus may induce strong nonlinear interactions.  \ Here the range has to be computed with respect to the dimension of the Euclidian component of the domain: hence a cubic nonlinearity is long range on $\R^{d'} \times \T^{d}$ for  $d' \leq 1$. Let us recall  the heuristics which leads to define the notion of short and long range of the nonlinearity $|u|^{p}u$. If one believes that the solution of NLS decays like the linear evolution  group ($\|e^{it \Delta_{\R^{d'}\times \T^{d}}}u_{0}\|_{L^{\infty}} \sim  t^{-d'/2}$ when $t\longrightarrow+\infty$), then one says that the nonlinearity is short range  if the potential $|u|^{p}\sim t^{-p d'/2}$ is integrable at infinity. 
 
The control of higher order Sobolev norms (i.e. assertion $(i)$) in the case of short range nonlinearities may be obtained by global in time Strichartz inequalities (see e.g. \cite[p.7, Theorem~2]{BourgainBook}).  In\;\cite{TzVi2}, Tzvetkov and Visciglia recently proved scattering results, with large initial conditions, on $\R^d \times \T$. This shows that $(i)$ may also hold true on product manifolds and for large initial conditions. \ For long range nonlinearities on Euclidean spaces, given initial data of arbitrary size (at least in the defocusing case), it is possible to obtain polynomial bounds  (\cite{Bo1996c, Bo1998b, St1997, St1997b, CoDeKnSt, So2011})  like ${\mathcal{O}}(t^{\alpha(s-1)})$ for the $H^{s}$ norm, with $s >1$ and $\alpha >0$ depending on the context, a notable exception being integrable NLS  (cubic NLS on $\R$), where these norms are bounded in time. On compact domains, such studies for NLS give rise to similar polynomial bounds (\cite{Bo1996c, St1997b, So2011b, CoKwOh2012}). Our main result in this paper is to prove  assertion $(i)$ under a smallness assumption on the initial data. We guess that an adaptation of the {\it upside-down} I-method, which gave some of the most accurate results quoted before, could be done in our context and give polynomial bounds for  Sobolev norms of any order, without smallness assumption on the initial data.

Finally, observe that even for linear Schr\"odinger equations on compact manifolds we only have in general    subpolynomial bounds (${\mathcal{O}}(t^{\eps})$ for every $\eps>0$, or under analytic assumptions logarithmic bounds). See  \cite{Bo1999, Bo2004, Zh2008, De2011}. 
\medskip

In~\cite{HPTV}, the  proof consists in establishing a modified scattering and in constructing modified wave operators. It turns out that the modified asymptotic dynamics are dictated by the resonant part of~\eqref{NLST2R} and that this resonant system has solutions with infinitely growing high Sobolev norms $H^s$. In our case, we can follow  the same strategy but, since we add the convolution potential $V$, the modified asymptotic dynamics are dictated by a  non resonant system which does not allow interaction between different energy levels.

\medskip

Notice that when one adds a second direction of diffusion, i.e. considering~\eqref{NLST2R} on $\R^2\times \T^{d}$, then the solutions scatter to constant solutions (see Tzvetkov-Visciglia~\cite{TzVi}) and thus we are again in case\;$(i)$, which is coherent with the short range of the nonlinearity. So~\eqref{NLST2R} on $\R\times \T^{d}$ seems to be a limit case with respect to the alternative above. In this perspective, we can conjecture that~\eqref{NLST2}  is weak turbulent in the sense of~$(ii)$ (actually more turbulent than~\eqref{NLST2R}). The case of~\eqref{NLST2V} is less clear, in particular in view of the existence of plenty of linearly stable KAM tori proved in~\cite{EK}.

\subsection{Statement of the result}
Denote by $\T^{d}=\big(\R/(2\pi\Z)\big)^{d}$. In this   this work we  study the asymptotic behavior of the cubic defocusing nonlinear Schr\"odinger equation posed on the wave-guide manifolds $\R\times \T^d$, 
 \begin{equation} \label{NLS}
  \left\{
      \begin{aligned}
      &i\partial_t U +\Delta_{\mathbb{R}\times\mathbb{T}^d} U+V\star  U=\vert U\vert^2U , \quad (t,x,y)\in \R\times \R\times \T^{d},
       \\  &  U(0,x,y)  =U_0(x,y),
      \end{aligned}
    \right.
\end{equation}
where the unknown $U$ is a complex-valued function, and where $V$ is a generic perturbation which only depends on the variable $y$. In the sequel we denote by $\DD$ the whole linear operator
 \begin{equation*}
 \DD=\Delta_{\R\times\mathbb{T}^d} +V\star=\partial^2_x+\Delta_{\mathbb{T}^d} +V\star.
 \end{equation*}
 For $p\in \Z^{d}$, we denote by $\hat{V}_{p}$ the Fourier coefficients of $V$. The eigenvalues of the operator $\Delta_{\mathbb{T}^d} +V\star$ are 
\begin{equation*}
\lambda_{p}:=-|p|^{2}+\hat{V}_{p}, \quad p\in\Z^d.
\end{equation*}
In this paper we assume that $V$ belongs to the following space ($m> d/2$, $R>0$)
\begin{equation}\label{pot}
\W_{m}=\big\{ V(x)=\sum_{a\in \Z^d}v_a
e^{i a\cdot x}
\mid v'_{a}:=\frac{{v_a}{(1+|a|)^m}}{R} \in [-1/2,1/2]
\mbox{ for any }a\in \Z^d\; \big\}
\end{equation}
that we endow with the product probability measure\footnote{Here, for $a = (a_1,\ldots,a_d) \in \Z^d$, $|a|^2 = a_1^2 + \cdots + a_d^2$.}. 
 
 \medskip

In the sequel we suppose that the following non resonance assumption is satisfied
  \begin{assumption}\label{As1}
{\bf (Non resonance assumption)}:  There exist $c>0$ and $\gamma>0$ such that  for all $(p,q,r,s)\in \Z^{d}$ with $\big\{\vert p\vert, \vert r\vert\big\}\neq\big\{\vert q\vert,\vert s\vert\big\}$ one has
\begin{equation}\label{res1}
\big| \lambda_p-\lambda_q+\lambda_r-\lambda_s \big|\geq \frac{c}{\nu_{3}(p,q,r,s)^{\gamma}}
\end{equation}
where $\nu_{3}(p,q,r,s)$ is the third largest number among $|p|,|q|,|r|,|s|$.
\end{assumption}
This condition means that if $\big| \lambda_p-\lambda_q+\lambda_r-\lambda_s \big|$ is small, then at least three terms among $\big\{ |p|, |q|, |r|, |s|\big\}$ are large. Such a condition is well-adapted to control quadri-linear terms (see the proof of Lemma~\ref{FL}).

 It turns out that Assumption~\ref{As1} is generic in the following sense:
\begin{lemma}\label{generic}
Fix $m>d/2$, $R>0$. There exists a set $\Vc_m \subset \W_m$ of measure 1 such that, for any 
$\mu \in \mathcal{V}_m$,   Assumption \ref{As1} holds true.
\end{lemma}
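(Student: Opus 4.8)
The plan is to show that the complement of the good set has measure zero by a Borel--Cantelli argument, whose input is a measure estimate for each single quadruple together with the deterministic information carried by the integer part of the frequency combination. Throughout write $\Omega_{pqrs}=\lambda_p-\lambda_q+\lambda_r-\lambda_s=-N_{pqrs}+W_{pqrs}$, with the integer $N_{pqrs}=|p|^2-|q|^2+|r|^2-|s|^2\in\Z$ and the random part $W_{pqrs}=\hat V_p-\hat V_q+\hat V_r-\hat V_s$, and let $\nu_1\ge\nu_2\ge\nu_3\ge\nu_4$ be the ordered moduli. Since the $v'_a$ are i.i.d.\ uniform, $W_{pqrs}$ is a linear form in independent variables, the coefficient of $\hat V_a=\frac{R}{(1+|a|)^m}v'_a$ being the net signed ($\pm a$--identified) multiplicity of $a$; the admissibility hypothesis $\{|p|,|r|\}\neq\{|q|,|s|\}$ is exactly what makes this form nondegenerate, because a vanishing form forces $\{[p],[r]\}=\{[q],[s]\}$ as multisets of classes $[a]=\{a,-a\}$, hence $\{|p|,|r|\}=\{|q|,|s|\}$. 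First I would record the one-variable estimate: conditioning on all coefficients but one active index $a_\ast$, the law of $\Omega_{pqrs}$ is affine in a uniform variable of slope $\ge R(1+|a_\ast|)^{-m}$, so $\mathrm{meas}\{|\Omega_{pqrs}|<\delta\}\le 2\delta(1+|a_\ast|)^m/R$.

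Next I would use the integer part to cut down the combinatorics. Bounding $|W_{pqrs}|\le 2R(1+\nu_4)^{-m}$, whenever $\nu_4$ exceeds a fixed constant $K=K(R,m)$ one has $|W_{pqrs}|<1/2$, so $N_{pqrs}\neq0$ already forces $|\Omega_{pqrs}|\ge1/2$ deterministically. This confines the small-divisor mechanism to quadruples with $N_{pqrs}=0$ (or with a bounded number of bounded-modulus frequencies), in which the two largest frequencies must cancel in $N_{pqrs}$, i.e.\ $\nu_1\approx\nu_2$. One then splits according to how many of the four moduli are large. When at least three are large, $N_{pqrs}$ is forced to be huge unless the large indices cancel, and in the remaining cases one isolates the unique small-modulus variable (slope $\gtrsim R$, independent of the large frequencies), so that the decaying factor $\nu_3^{-\gamma}$ beats the polynomially many admissible completions once $\gamma$ is taken large.

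The hard part will be the configurations with exactly two large and two bounded frequencies, since there $\nu_3$ stays bounded while the two largest $p,q$ range over the infinite set of solutions of $|p|^2-|q|^2=|s|^2-|r|^2$, and a naive sum of the per-quadruple measures diverges. To handle this I would fix the two small frequencies $r,s$, condition on all large Fourier coefficients, and estimate directly the measure of the event that $Z:=\hat V_r-\hat V_s$ lies within $\delta=c\,\nu_3^{-\gamma}$ of the point set $\{-(\hat V_p-\hat V_q)\}$. The essential point is that $|\hat V_p-\hat V_q|\lesssim R\,\nu_2(p,q)^{-m}$, so these points cluster near $0$ at a rate governed by the second-largest modulus; combined with the sphere lattice-point bound $\#\{(p,q):|p|^2-|q|^2=k,\ |p|,|q|\le M\}\lesssim M^{2d-2+\eps}$, the $\delta$-neighbourhood of this clustered set has measure \emph{much} smaller than (number of points)$\times\delta$ — this clustering is what makes the estimate succeed already under $m>d/2$. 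This yields a bound on the measure of the bad event $B_{r,s}$ that decays in $\nu_3$ for $\gamma$ large, and summing over $r,s$ (only two free bounded frequencies) converges. Finally, Borel--Cantelli gives that for a.e.\ $V$ only finitely many quadruples violate the bound at a fixed constant, while a.e.\ $V$ also satisfies $\Omega_{pqrs}\neq0$ for every (of countably many) admissible quadruple; lowering $c$ to absorb the finitely many exceptions produces the required $c=c(V)>0$, so that $\Vc_m$ has measure $1$.
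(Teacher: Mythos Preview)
The paper itself gives no argument here; it simply quotes \cite[Proposition~2.7]{FG}. Your Borel--Cantelli scheme with the integer/random splitting $\Omega=-N+W$ and the clustering estimate for the ``two large, two small'' configuration is the right approach and is essentially what the cited references do.

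There is, however, a genuine gap in your treatment of the ``at least three large'' case. You assert that once the unique small index is isolated there remain only \emph{polynomially many admissible completions} in $\nu_3$, so that the factor $\nu_3^{-\gamma}$ wins. For $d\ge 2$ this is false, by exactly the mechanism you already diagnosed in the hard case: with the small index~$s$ and the $\nu_3$-sized index~$r$ fixed, momentum gives $q=p+r-s$, and pinning $N$ to one of the $O(R)$ admissible values places $p$ on an affine hyperplane of~$\Z^d$; there are still infinitely many lattice points on that hyperplane with $|p|\to\infty$. The ``all four large, $N=0$'' subcase is likewise not covered by your sentence, since there is then no small-modulus variable to isolate at all. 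The fix in both situations is simply to reuse your own clustering argument: condition on every Fourier mode except the one of smallest modulus and observe that the targets accumulate at finitely many points (namely $-N+v_r$, one per admissible $N$) as the two largest moduli tend to infinity, so the $\delta$-neighbourhood again has measure $O(\delta^{\alpha})$ rather than $\infty\cdot\delta$. In short, the honest dichotomy is ``$\nu_2$ bounded'' (finitely many quadruples, handled pointwise) versus ``$\nu_2$ unbounded'' (clustering), and the clustering step is needed in \emph{every} unbounded configuration, not only the one you singled out.

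A minor remark: the $\pm a$-identification you invoke for nondegeneracy is unnecessary and slightly misrepresents the probability space. In $\mathcal W_m$ as defined, the coefficients $(v_a)_{a\in\Z^d}$ are independent for \emph{all} $a$ (no reality or evenness constraint is imposed on~$V$), so $W_{pqrs}$ vanishes identically iff $\{p,r\}=\{q,s\}$ as multisets in $\Z^d$, which already forces $\{|p|,|r|\}=\{|q|,|s|\}$.
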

The proof is quite standard and is a consequence of~\cite[Proposition 2.7]{FG}  (see also~\cite{BG}).  

\medskip

 We now  define the limit system
\begin{equation}\label{RSS}
i\partial_\tau G(\tau)=\quad\mathcal R[G(\tau), G(\tau), G(\tau)],
\end{equation}
where
\begin{equation*}
\mathcal F_{\R\times \T^d}\, \mathcal R[G, G, G](\xi, p)=\quad\sum_{\substack{p_1+p_3=p+p_2\\ \{\vert p_1\vert, \vert p_{3}\vert\}=\{\vert p\vert,\vert p_2\vert\}}}\widehat{G}(\xi,p_1)\overline{\widehat{G}(\xi,p_2)}\widehat{G}(\xi,p_3).
\end{equation*}
Here $\widehat G(\xi, p)=\mathcal F_{\R\times \T^d} G (\xi, p)$ is the Fourier transform of $G$ at $(\xi, p)\in \R\times \Z^d$. Observe  that the dependence on $\xi$ is merely parametric. The system~\eqref{RSS} is   the resonant system for the cubic NLS equation on $\mathbb{T}^d$, with the operator $\Delta_{\mathbb{T}^d} +V\star$, provided that the non resonant assumption~\eqref{res1} is satisfied.

In the sequel we fix $N_{0}=N_{0}(d,\gamma)$ a large integer which will be given by the proof, which only depends on the dimension~$d$ and on the parameter $\gamma>0$ which appears in~\eqref{res1}. For  $N\geq N_{0}$ we define the Banach spaces $S$ and $S^{+}$ by the norms

\begin{equation*} 
 \Vert F\Vert_{S}:=\Vert F\Vert_{H^N_{x,y}}+\Vert xF\Vert_{L^2_{x,y}},\quad
\Vert F\Vert_{S^+}:=\Vert F\Vert_S+\Vert (1-\partial_{xx})^4F\Vert_{S}+\Vert xF\Vert_{S}.
 \end{equation*}

Following the same line as in~\cite{HPTV}, we prove that the solutions of~\eqref{NLS} scatter to solutions of the resonant system~\eqref{RSS}:
\begin{theorem}\label{thm1}
Let $ 1\leq d \leq 4$ and $N\geq N_0$. There exists $\eps=\eps(N,d)>0$ such that if $U_0\in S^+$ satisfies
\begin{equation*}
\Vert U_0\Vert_{S^+}\le\eps,
\end{equation*}
and if $U(t)$ solves~\eqref{NLS} with initial data $U_0$, then $U\in \mathcal{C}((0,+\infty);H^N)$ exists globally and exhibits modified scattering to its resonant dynamics~\eqref{RSS} in the following sense: there exists $G_0\in S$ such that if $G(t)$ is the solution of~\eqref{RSS} with initial data $G(0)=G_0$, then
\begin{equation*}
\Vert U(t)-e^{it\DD}G(\pi\ln t)\Vert_{H^N(\mathbb{R}\times\mathbb{T}^d)}\longrightarrow 0\quad\hbox{ as }\quad t\longrightarrow+\infty,
\end{equation*}
and
\begin{equation*}
\|U(t)\|_{L^\infty_x H^1_y}\leq C (1+|t|)^{-\frac{1}{2}}.
\end{equation*}
\end{theorem}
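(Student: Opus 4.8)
The plan is to adapt the interaction-picture analysis of~\cite{HPTV}. First I would introduce the profile $F(t):=e^{-it\DD}U(t)$, which transforms~\eqref{NLS} into $i\partial_t F=e^{-it\DD}\big(|e^{it\DD}F|^2e^{it\DD}F\big)=:\mathcal N^t[F,F,F]$. Taking the space Fourier transform on $\R\times\T^d$, the evolution of $\widehat F(\xi,p)$ becomes a trilinear expression summed over $p_1-p_2+p_3=p$ and integrated over $\xi_1-\xi_2+\xi_3=\xi$, carrying the oscillatory factor $e^{it\Omega}$ with $\Omega=(\xi^2-\xi_1^2+\xi_2^2-\xi_3^2)+(\lambda_{p_1}-\lambda_{p_2}+\lambda_{p_3}-\lambda_p)$. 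The whole argument rests on an asymptotic analysis of this expression as $t\to+\infty$.

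The first step is to handle the continuous variables by stationary phase in $\xi$: the momentum constraint together with the quadratic resonance concentrates the $\xi$-integral near $\xi_1=\xi$ and $\xi_3=\xi$, yielding a leading contribution of order $t^{-1}$ (with the explicit constant producing the factor $\pi$) plus a remainder that is integrable in time in the $S$-norm. It is precisely in bounding this remainder, together with the action of the weight $x$ and of $N$ derivatives on it, that the extra regularity and decay encoded in $S^+$ are consumed. This reduces matters to the discrete sum over $p_1-p_2+p_3=p$ weighted by $e^{it\Theta}$, $\Theta:=\lambda_{p_1}-\lambda_{p_2}+\lambda_{p_3}-\lambda_p$.

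The second step, which is the main departure from~\cite{HPTV}, is to split this sum according to whether $\{|p_1|,|p_3|\}=\{|p|,|p_2|\}$. On the complementary non-resonant set, Assumption~\ref{As1} (applied to $(p_1,p_2,p_3,p)$) gives the small-divisor lower bound $|\Theta|\ge c\,\nu_3^{-\gamma}$, so a normal form, i.e. an integration by parts in time, trades each such term for boundary terms and $\partial_tF$-contributions divided by $\Theta$; the loss of $\nu_3^{\gamma}$ is absorbed by choosing $N_0=N_0(d,\gamma)$ large enough, and what remains is integrable in $S$ (this is the content of Lemma~\ref{FL}). The terms surviving the normal form reconstitute the resonant interaction, so that $i\partial_t F=\frac{\pi}{t}\mathcal R[F,F,F]+\mathcal E(t)$ with $\int_1^{\infty}\|\mathcal E(t)\|_S\,dt<\infty$.

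From here I would close a bootstrap on $[1,\infty)$ giving $\|F(t)\|_S\lesssim\eps$ (with the stronger $S^+$ norm allowed to grow slowly), which furnishes global existence in $\mathcal C((0,\infty);H^N)$; combined with the one-dimensional dispersive estimate $\|e^{it\partial_{xx}}g\|_{L^\infty_x}\lesssim t^{-1/2}\big(\|g\|_{L^2_x}+\|xg\|_{L^2_x}\big)$ and the control of $H^1_y$ by $H^N$, this yields the decay $\|U(t)\|_{L^\infty_xH^1_y}\lesssim(1+|t|)^{-1/2}$. Reparametrizing time by $\tau=\pi\ln t$ turns the leading equation into the autonomous system~\eqref{RSS}; comparing $F(t)$ with $G(\pi\ln t)$ and using $\int^{\infty}\|\mathcal E\|_S<\infty$ shows that $F(t)-G(\pi\ln t)\to0$ in $S$ for a suitable limit datum $G_0\in S$, which after undoing the profile is exactly the claimed modified scattering. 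I expect the principal obstacle to be this oscillatory–multilinear analysis in the strong $S$-norm: one must run the stationary phase in $\xi$ and the small-divisor normal form in $p$ simultaneously, control the weight $x$ (which does not commute with $e^{it\DD}$ and generates $2it\partial_x$ factors) as well as the top-order derivatives, and check that every error is genuinely time-integrable — the requirement that the regularity $N$ beat the loss $\nu_3^{\gamma}$ uniformly over all frequency configurations being what pins down $N_0$.
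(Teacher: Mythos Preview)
Your proposal is correct and follows essentially the same route as the paper: introduce the profile $F=e^{-it\DD}U$, establish the decomposition $\mathcal N^t=\frac{\pi}{t}\mathcal R+\mathcal E^t$ via the oscillatory analysis in $\xi$ and the normal form in $p$ (the latter using Assumption~\ref{As1} to control the small divisors, which is exactly Lemma~\ref{FL}), and then invoke the bootstrap/fixed-point machinery of~\cite[Sections~5--6]{HPTV}. One small imprecision: the bootstrap does not give $\Vert F(t)\Vert_S\lesssim\eps$ uniformly---in the $X_T$ norm only $\Vert F\Vert_Z$ stays bounded while $\Vert F\Vert_S$ is allowed to grow like $t^{\delta}$---and the $S$-integrability of $\mathcal E$ (hence the $H^N$ convergence) comes only after feeding back the $S^+$ control, cf.\ the last line of Proposition~\ref{FL1}.
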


At this stage we observe that  the dynamics of~\eqref{RSS} are bounded (see Lemma~\ref{gAdmi}). Actually~\eqref{RSS} is globally well-posed in $H^{1}_{x,y}$ for $1\leq d \leq 4$,  and all $H^{N}_{x,y}$ norms are conserved by the flow:
 \begin{equation*}
 \|G(t)\|_{H^{N}_{x,y}} =   \|G_{0}\|_{H^{N}_{x,y}}.
 \end{equation*}
As a consequence we obtain our main result 
\begin{corollary}\label{coro14}
Let $ 1\leq d \leq 4$ and $N\geq N_0$. There exists $\eps=\eps(N,d)>0$ such that if $U_0\in S^+$ satisfies
\begin{equation*}
\Vert U_0\Vert_{S^+}\le\eps,
\end{equation*}
and if $U(t)$ solves~\eqref{NLS} with initial data $U_0$, then $U\in \mathcal{C}((0,+\infty);H^N)$ exists globally and
$$
\|U(t)\|_{ H^N_{x,y}}\leq   C_N\,\eps\,
$$
for some constant $C_N$ depending only on $V$ and $N$. Moreover, $\|U(t)\|_{ H^N_{x,y}}$ tends to a constant when $t\longrightarrow +\infty$.
\end{corollary}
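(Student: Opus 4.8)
The plan is to read off the corollary from Theorem~\ref{thm1} and the conservation law for the resonant flow~\eqref{RSS} recalled just above, the only extra ingredient being that the free group $e^{it\DD}$ is an isometry of every $H^N_{x,y}$. No further analysis of~\eqref{NLS} is needed, since all the hard work is already contained in Theorem~\ref{thm1}.

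First I would check the isometry property. On the Fourier side in $(x,y)$ the operator $\DD=\partial^2_x+\Delta_{\T^d}+V\star$ is the multiplier $(\xi,p)\mapsto-\xi^2+\lambda_p$, and the numbers $\lambda_p=-|p|^2+\hat V_p$ are real, being the eigenvalues of the self-adjoint operator $\Delta_{\T^d}+V\star$. Hence $e^{it\DD}$ is multiplication by the modulus-one symbol $e^{it(-\xi^2+\lambda_p)}$; it is unitary on $L^2_{x,y}$ and commutes with the Fourier weight defining $H^N_{x,y}$, so it is an isometry of $H^N_{x,y}$. Using this together with the conservation $\|G(\tau)\|_{H^N_{x,y}}=\|G_0\|_{H^N_{x,y}}$ from Lemma~\ref{gAdmi}, we get, for every $t>0$,
\[
\|e^{it\DD}G(\pi\ln t)\|_{H^N_{x,y}}=\|G_0\|_{H^N_{x,y}}=:L .
\]
The reverse triangle inequality and Theorem~\ref{thm1} then give
\[
\big|\,\|U(t)\|_{H^N_{x,y}}-L\,\big|\le\|U(t)-e^{it\DD}G(\pi\ln t)\|_{H^N_{x,y}}\longrightarrow 0\quad\text{as }\ t\to+\infty,
\]
which is exactly the last assertion of the corollary: $\|U(t)\|_{H^N_{x,y}}$ converges to the constant $L$.

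For the uniform bound I would not use scattering, which only controls large times, but the global a priori estimate on the profile $\mathcal G(t):=e^{-it\DD}U(t)$ that underlies the proof of Theorem~\ref{thm1}, namely $\sup_{t>0}\|\mathcal G(t)\|_S\lesssim\eps$. Since $e^{-it\DD}$ is an $H^N_{x,y}$-isometry and $\|F\|_{H^N_{x,y}}\le\|F\|_S$, this yields $\|U(t)\|_{H^N_{x,y}}=\|\mathcal G(t)\|_{H^N_{x,y}}\le\|\mathcal G(t)\|_S\lesssim\eps$ for all $t>0$; and since $\|U(t)\|_{H^N_{x,y}}\to L$, the limit obeys $L\lesssim\eps$ as well. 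Absorbing the implicit constants into $C_N$ gives $\|U(t)\|_{H^N_{x,y}}\le C_N\eps$.

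The genuinely nontrivial input is thus entirely carried by Theorem~\ref{thm1}; the corollary itself is a bookkeeping step combining the isometry of $e^{it\DD}$, the conservation of the $H^N_{x,y}$ norm under~\eqref{RSS}, and the vanishing of the modified-scattering remainder. The one place to be careful, and the main (indeed only) obstacle, is to quote the bootstrap bound $\|\mathcal G(t)\|_S\lesssim\eps$ uniformly in $t$, including small times, rather than only asymptotically: the scattering statement by itself localizes the dynamics at $t\to+\infty$ but does not by itself deliver the sharp factor $\eps$ in the uniform bound, which comes from the global-in-time control of the profile established while proving Theorem~\ref{thm1}.
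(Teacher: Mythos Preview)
Your approach is correct and is exactly what the paper does: it deduces the corollary from Theorem~\ref{thm1} together with the conservation law~\eqref{Sol1} of Lemma~\ref{gAdmi} (the paper's entire proof is the single sentence ``The result of Corollary~\ref{coro14} is a direct consequence of~\eqref{Sol1}''). Your explicit check that $e^{it\DD}$ is an $H^N_{x,y}$-isometry and the reverse triangle inequality argument for the limit are the right way to unpack that sentence.

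One small imprecision worth flagging, since you single it out as ``the main (indeed only) obstacle'': the bootstrap underlying Theorem~\ref{thm1} is a bound on $\|F\|_{X_T}$, and the $X_T$ norm carries the weight $(1+|t|)^{-\delta}$ in front of $\|F(t)\|_S$. Hence what the proof of Theorem~\ref{thm1} actually delivers is
\[
\|\mathcal G(t)\|_S\lesssim (1+|t|)^{\delta}\eps,
\]
not the uniform bound $\sup_{t>0}\|\mathcal G(t)\|_S\lesssim\eps$ that you quote. This does not damage the conclusion: for $t\ge 1$ the quantitative form of the modified scattering produced in the proof of Theorem~\ref{thm1} gives a remainder $\|U(t)-e^{it\DD}G(\pi\ln t)\|_{H^N}\lesssim\eps$ uniformly (indeed with decay), and $\|G_0\|_{H^N}\le\|G_0\|_S\lesssim\eps$ is part of that construction; for $t\in[0,1]$ one simply uses local theory. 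So the uniform estimate $\|U(t)\|_{H^N_{x,y}}\le C_N\eps$ does hold, but it comes from the scattering error bound plus $\|G_0\|_S\lesssim\eps$, rather than from a uniform $S$-bound on the profile.
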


This shows that every solution to \eqref{NLS} issued from a small and smooth initial condition has  asymptotically constant Sobolev norms.\medskip

We also notice that, as in~\cite{HPTV}, we can construct modified wave operators in the following sense:

\begin{theorem}\label{thm2}
Let $ 1\leq d \leq 4$ and $N\geq N_0$.  There exists $\eps=\eps(N,d)>0$ such that if $G_0\in S^+$ satisfies
\begin{equation*}
\Vert G_0\Vert_{S^+}\le\eps,
\end{equation*}
and $G(t)$ solves~\eqref{RSS} with initial data $G_0$, then there exists $U\in \mathcal{C}((0,\infty);H^N)$ a solution of~\eqref{NLS} such that
\begin{equation*}
\Vert U(t)-e^{it\DD}G(\pi\ln t)\Vert_{H^N(\mathbb{R}\times\mathbb{T}^d)}\longrightarrow0\quad \hbox{ as }\quad t \longrightarrow+\infty.
\end{equation*}
\end{theorem}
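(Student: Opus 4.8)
The plan is to build $U$ by a contraction argument carried out backward from $t=+\infty$, reversing the roles of the Cauchy datum and the asymptotic profile in the proof of Theorem~\ref{thm1}. First I pass to the profile variable $u(t):=e^{-it\DD}U(t)$, for which \eqref{NLS} reads $i\partial_t u=\mathcal N_t[u]$ with $\mathcal N_t[u]:=e^{-it\DD}\big(|e^{it\DD}u|^2e^{it\DD}u\big)$. The nonlinear analysis already performed for Theorem~\ref{thm1} --- the normal form reduction based on Assumption~\ref{As1} together with the quadri-linear estimates of Lemma~\ref{FL} --- provides the splitting
\begin{equation*}
\mathcal N_t[u]=\frac{\pi}{t}\,\mathcal R[u,u,u]+\mathcal E_t[u],
\end{equation*}
where $\mathcal R$ is the resonant nonlinearity of \eqref{RSS} and the remainder $\mathcal E_t$ is negligible in integrated time: for profiles with $\|u(s)\|_S\lesssim\eps$ and $\|u(s)\|_{S^+}\lesssim\eps\,s^{\delta}$ ($\delta>0$ small) one has a dyadic bound of the form $\big\|\int_{2^k}^{2^{k+1}}\mathcal E_s[u]\,ds\big\|_S\lesssim\eps^3\,2^{-(\kappa-3\delta)k}$ for some $\kappa>0$, summable once $\kappa>3\delta$. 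The decay, and the loss of $x$-derivatives compensated by the $S^+$ norm and by the choice of $N_0=N_0(d,\gamma)$, comes from integrating the non-resonant oscillatory phases by parts in time and invoking the small-divisor bound \eqref{res1}.

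Next I solve the limit system. As $\|G_0\|_{S^+}\le\eps$ is small, Lemma~\ref{gAdmi} yields a global solution $G(\tau)$ of \eqref{RSS} with $\|G(\tau)\|_{H^N_{x,y}}=\|G_0\|_{H^N_{x,y}}$; the weighted components of the $S$ and $S^+$ norms may grow, but only polynomially in $\tau$, hence like $(\ln t)^C\lesssim t^{\delta}$ after setting $\tau=\pi\ln t$. Writing $g(t):=G(\pi\ln t)$, the chain rule and \eqref{RSS} give $i\partial_t g=\frac{\pi}{t}\mathcal R[g,g,g]$, so $g$ is an exact solution of the resonant profile flow. I then seek $u=g+w$; subtracting the two evolutions, the difference solves
\begin{equation*}
i\partial_t w=\frac{\pi}{t}\big(\mathcal R[g+w,g+w,g+w]-\mathcal R[g,g,g]\big)+\mathcal E_t[g+w],
\end{equation*}
and, imposing the asymptotic condition $w(t)\to0$ as $t\to+\infty$, this is equivalent to the integral equation
\begin{equation*}
w(t)=i\int_t^{+\infty}\Big[\frac{\pi}{s}\big(\mathcal R[g+w]-\mathcal R[g]\big)(s)+\mathcal E_s[g+w]\Big]\,ds=:\Phi[w](t).
\end{equation*}

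I would run the contraction for $\Phi$ on the complete metric space
\begin{equation*}
X_T:=\big\{\,w\in\mathcal C([T,+\infty);S)\ :\ \sup_{t\ge T}t^{\kappa'}\|w(t)\|_S\le\eps,\ \ \sup_{t\ge T}t^{-\delta}\|w(t)\|_{S^+}\le\eps\,\big\},
\end{equation*}
with $\kappa'=\kappa-3\delta>0$ and $T=T(\eps)$ large. Summing the dyadic remainder bounds gives $\|\int_t^{+\infty}\mathcal E_s[g+w]\,ds\|_S\lesssim\eps^3\,t^{-\kappa'}$, while the trilinear difference is handled by the Lipschitz estimate $\|\mathcal R[g+w]-\mathcal R[g]\|_S\lesssim(\|g\|_S^2+\|w\|_S^2)\|w\|_S$, so that after multiplication by $\pi/s$ and integration one gains a factor $\lesssim\eps^2/\kappa'<1/2$. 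The same bounds show that $\Phi$ maps $X_T$ into itself and is a contraction, hence admits a unique fixed point $w$ with $\|w(t)\|_S\lesssim\eps\,t^{-\kappa'}$ on $[T,+\infty)$. Setting $U:=e^{it\DD}(g+w)$ produces a solution of \eqref{NLS} on $[T,+\infty)$; being small in $S$ it extends to $U\in\mathcal C((0,+\infty);H^N)$ through the local theory and the a priori bounds already used for Theorem~\ref{thm1}, and
\begin{equation*}
\|U(t)-e^{it\DD}G(\pi\ln t)\|_{H^N}=\|w(t)\|_{H^N}\le\|w(t)\|_S\lesssim\eps\,t^{-\kappa'}\longrightarrow0.
\end{equation*}

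The principal obstacle is the remainder control $\big\|\int_{2^k}^{2^{k+1}}\mathcal E_s\,ds\big\|_S\lesssim\eps^3 2^{-(\kappa-3\delta)k}$. Because the cubic nonlinearity is long range on $\R\times\T^d$, the pointwise remainder is only of size $t^{-1}$ and by itself not integrable; integrability is recovered only after the normal form transformation that trades the non-resonant oscillations for the divisors $(\lambda_p-\lambda_q+\lambda_r-\lambda_s)^{-1}$, which Assumption~\ref{As1} bounds at the cost of three large frequencies. This is exactly why the extra regularity encoded in $S^+$ and the large threshold $N_0(d,\gamma)$ are required, and it is the same mechanism underlying Theorem~\ref{thm1}. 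Consequently, the genuinely new work for the wave operators is organisational: checking that these estimates are uniform enough to be summed backward from $t=+\infty$, and verifying the at most $t^{\delta}$ growth of the weighted $S^+$-norm of $G(\pi\ln t)$ that keeps the scheme closed.
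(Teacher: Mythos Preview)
Your approach is essentially that of the paper, which gives no independent argument for Theorem~\ref{thm2} but simply states that once Proposition~\ref{FL1} is established the fixed-point construction of \cite[Section~6]{HPTV} applies verbatim; your backward-in-time contraction for the difference $w=u-g$ is exactly that scheme.

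One point does need sharpening. In the Lipschitz step you bound $\|\mathcal R[g+w]-\mathcal R[g]\|_S$ by $(\|g\|_S^2+\|w\|_S^2)\|w\|_S$, but you yourself observe that the weighted component of $\|g(t)\|_S$ may grow like $t^{\delta}$; the integral $\int_t^\infty s^{-1}\|g(s)\|_S^2\|w(s)\|_S\,ds$ then carries a factor $t^{2\delta-\kappa'}$ and does not close at the rate $t^{-\kappa'}$ you claim. The remedy, as in \cite{HPTV}, is to use the finer trilinear estimate that places two of the three factors in the $Z$-norm and only one in $S$: by Lemma~\ref{gAdmi} one has $\|g(t)\|_Z=\|G_0\|_Z\le\eps$ for all $t$, so the resonant difference contributes $\int_t^\infty s^{-1}\eps^2\|w(s)\|_S\,ds\lesssim(\eps^2/\kappa')\,\eps\,t^{-\kappa'}$ and the contraction closes. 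With this adjustment your outline is complete and matches the paper's intended proof.
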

 There are analogue statements in the limit $t\longrightarrow -\infty$.\medskip
 
 As we mentioned previously, the analogues of Theorems~\ref{thm1} and~\ref{thm2} in the case $V=0$ were proved in~\cite{HPTV} (see also~\cite{HPTVproc}). We  show here that the same strategy as~\cite{HPTV} also applies in a case where there are small divisors. Most of the arguments of~\cite{HPTV} apply mutatis mutandis and we will rely on them.  In this text, we focus on  the differences, namely on the control of the terms containing small divisors. 
 
 In~\cite{HPTV}, the regularity condition was $N_{0}=30$. Here, the corresponding $N_{0}$  is not explicit, and possibly large: it depends on $\gamma$ which appears in~\eqref{res1} and Lemma~\ref{generic}.  It would be interesting  to understand what happens to less regular initial conditions, namely the case $N_0\geq 1$. But this seems to be a very difficult question.\ \medskip
 
 It is likely that in the previous statements we can avoid the restriction $d\leq 4$. This assumption was needed in~\cite{HPTV}, because this was the condition such that the corresponding limit system was well-posed in the energy space, namely $H^{1}$. Here instead, we can use that every $H^{s}$-norm is invariant by the flow of the limit system~\eqref{RSS}, and we expect that we can follow the analysis in~\cite{HPTV} and replace $H^{1}$ by $H^{s}$ for $s>d/2$. 

\subsection{Notations} For the reader's convenience, we keep most of the notations used in~\cite{HPTV}, and we recall them below.\medskip

$\bullet$ {\it Fourier transforms and frequency localisation:} We define the Fourier transform on $\mathbb{R}$ by
\begin{equation*}
\widehat{g}(\xi):=\frac{1}{2\pi}\int_{\mathbb{R}}e^{-ix\xi}g(x)dx.
\end{equation*}
Similarly, if $F(x,y)$ depends on $(x,y)\in\mathbb{R}\times\mathbb{T}^d$, $\widehat{F}(\xi,y)$ denotes the partial Fourier transform in\;$x$. The Fourier coefficient of $h:\mathbb{T}^d\to\mathbb{C}$ is given by 
\begin{equation*}
h_p:=\frac{1}{(2\pi)^{d}}\int_{\mathbb{T}^d}h(y)e^{-i\langle p,y\rangle}dy,\qquad p\in\mathbb{Z}^d.
\end{equation*}
The full  spacial Fourier transform reads
\begin{equation*}
\left(\mathcal{F}F\right)(\xi,p)=\frac{1}{(2\pi)^{d}}\int_{\mathbb{T}^d}\widehat{F}(\xi,y)e^{-i\langle p,y\rangle}dy=\widehat{F}_p(\xi).
\end{equation*}

 We define the Littlewood-Paley projections in the $x$ variable by 
\begin{equation*}
\left(\mathcal{F}Q_{\leq N}F\right)(\xi,p)=\varphi(\frac{\xi}{N})\left(\mathcal{F}F\right)(\xi,p),
\end{equation*}
where $\varphi\in \mathcal{C}^\infty_c(\mathbb{R})$, $\varphi(x)=1$ when $\vert x\vert\leq 1$ and $\varphi(x)=0$ when $\vert x\vert\ge 2$. Next, we define
\begin{equation*}
Q_N=Q_{\leq N}-Q_{\leq N/2},\quad Q_{\ge N}=1-Q_{\leq N/2}.
\end{equation*}
$\bullet$ {\it Resonant sets:}
We define the zero momentum set by  
\begin{equation}\label{DiscPar}
\mathcal{M}:=\big\{(p,q,r,s)\in\mathbb{Z}^{4d}:\,\,p-q+r-s=0\big\},\\
\end{equation}
and the resonant  set by
\begin{equation*}
\Gamma_0:=\big\{(p,q,r,s)\in\mathcal{M}:\,\,\lambda_{p}-\lambda_{q}+\lambda_{r}-\lambda_{s}=0\big\}.
\end{equation*}
Under Assumption \ref{As1} on $(\mu_ j)_{j\geq 0}$ we have 
\begin{equation*}
\Gamma_0=\big\{(p,q,r,s)\in\mathcal{M}:\,\, (\vert p\vert=\vert q\vert \;\; \text{and}\;\; \vert r\vert=\vert s\vert)  \;\; \text{or}\;\;  (\vert p\vert=\vert s\vert \;\; \text{and}\;\; \vert q\vert=\vert r\vert) \big\}.
\end{equation*}

$\bullet$ {\it Structure of the nonlinearity:} Let us define the trilinear form $\mathcal{N}^{t}$  by 
\begin{equation}\label{defN}
\mathcal{N}^{t}[F,G,H]:=e^{-it\DD}\Big( e^{it\DD}F\cdot e^{-it\DD}\overline{G}\cdot e^{it\DD}H\Big).
\end{equation}
Let $\dis U(t,x,y)=e^{it\DD} F(t)$, then we see that $U$ solves~\eqref{NLS} if and only if $F$ solves
\begin{equation*} 
i\partial_t F(t) = \mathcal{N}^{t}[F(t),F(t),F(t)].
\end{equation*}
A direct computation shows that
\begin{multline*}
\mathcal{F}\mathcal{N}^t[F,G,H](\xi,p)
=
\sum_{(p,q,r,s)\in\mathcal{M}}e^{it\left[\lambda_{p}-\lambda_{q}+\lambda_{r}-\lambda_{s}\right]}
\int_{\mathbb{R}^2}e^{it2\eta\kappa}\widehat{F}_{q}(\xi-\eta)\overline{\widehat{G}_{r}}(\xi-\eta-\kappa)\widehat{H}_{s}(\xi-\kappa)d\kappa d\eta\,.
\end{multline*}
The resonant part of the nonlinearity is defined by   
\begin{equation}\label{DeF}
\mathcal{F}\mathcal{R}[F,G,H](\xi,p):=\sum_{(p,q,r,s)\in\Gamma_0}\widehat{F}_q(\xi)\overline{\widehat{G}_r}(\xi)\widehat{H}_s(\xi).
\end{equation}
 
$\bullet$ {\it Norms:} We consider the following Sobolev norm on sequences  
\begin{equation*}
\Vert \{a_p\}\Vert_{h^s_p}^2:=\sum_{p\in\mathbb{Z}^d}\left[1+\vert p\vert^2\right]^s\vert a_p\vert^2.
\end{equation*}
In the sequel we will need the norm on functions $F: \R\times \T^d\longrightarrow \C$
\begin{equation*}
\Vert F\Vert_{Z}^2:=\sup_{\xi\in\mathbb{R}}\left[1+\vert \xi\vert^2\right]^2\sum_{p\in\mathbb{Z}^d}\left[1+\vert p\vert^{2}\right]\vert\widehat{F}_p(\xi)\vert^2=\sup_{\xi\in\mathbb{R}}\left[1+\vert \xi\vert^2\right]^2\Vert \widehat{F}_p(\xi)\Vert_{h^1_p}^2.
\end{equation*}

 \section{Structure of the nonlinearity} 
 In this section we explain how we can adapt the method of~\cite{HPTV} in order to prove Theorems~\ref{thm1} and~\ref{thm2}. The first step is to understand well the structure of the nonlinearity $\mathcal{N}^{t}$ in~\eqref{defN}, and this is the content of Proposition~\ref{FL1} below. With this result at hand,  Theorems~\ref{thm1} and~\ref{thm2} are proven exactly as in~\cite[Sections~5 and 6]{HPTV}   by fixed point arguments. Therefore we only focus on the proof of Proposition~\ref{FL1}. \medskip

We show here that the nonlinear term can be decomposed into an effective term, plus a remainder, which is - roughly speaking- integrable in time. Namely we can write
\begin{equation*} 
\mathcal{N}^t[F,G,H]=\frac{\pi}{t}\mathcal{R}[F, G,H]+\mathcal{E}^t[F,G,H]
\end{equation*}
where $\mathcal{R}$ is given in~\eqref{DeF}. 

For some small enough  absolute constant $\delta>0$, we define the space-time norms  
\begin{eqnarray*} 
\Vert F\Vert_{X_T}&:=&\sup_{0\leq t\leq T}\big\{\Vert F(t)\Vert_Z+(1+\vert t\vert)^{-\delta}\Vert F(t)\Vert_S +(1+\vert t\vert)^{1-3\delta}\Vert\partial_t F(t)\Vert_S\big\},\\
\Vert F\Vert_{X_T^+}&:=&\Vert F\Vert_{X_T}+\sup_{0\leq t\leq T}\big\{(1+\vert t\vert)^{-5\delta}\Vert F(t)\Vert_{S^+}+(1+\vert t\vert)^{1-7\delta}\Vert\partial_t F(t)\Vert_{S^+}\big\}.
\end{eqnarray*}

The next result is an analogue of~\cite[Proposition 3.1]{HPTV} and reads as follows

\begin{proposition}\label{FL1}
Assume that for $T\geq 1$,  $F,G,H: \mathbb{R}\to S$ satisfy
\begin{equation} \label{le}
\Vert F\Vert_{X_{T}}+\Vert G\Vert_{X_{T}}+\Vert H\Vert_{X_{T}}\leq 1.
\end{equation}
Then for $t\in [T/4,T]$, we can write
$$
\mathcal{E}^t[F(t),G(t),H(t)]=\mathcal{E}_1^t+\mathcal{E}_2^t\,,
$$
where the following bounds hold uniformly in $T\geq 1$,
\begin{eqnarray*} 
T^{-\delta}\Vert \int_{T/2}^T\mathcal{E}_i(t)dt\Vert_S&\lesssim &1,\quad i=1,2,\\
T^{1+\delta}\sup_{T/4\leq t\leq T}\Vert \mathcal{E}_1(t)\Vert_Z&\lesssim &1,\\
T^{\frac{1}{10}}\sup_{T/4\leq t\leq T}\Vert \mathcal{E}_3(t)\Vert_S&\lesssim& 1,
\end{eqnarray*}
where $\mathcal{E}_2(t)=\partial_t\mathcal{E}_3(t)$.
Assuming in addition
\begin{equation} \label{BA+}
\Vert F\Vert_{X^+_{T}}+\Vert G\Vert_{X^+_{T}}+\Vert H\Vert_{X^+_{T}}\leq 1,
\end{equation}
we also have that
\begin{equation*} 
T^{-5\delta}\Vert \int_{T/2}^T\mathcal{E}_i(t)dt\Vert_{S^+}\lesssim 1,\qquad  T^{2\delta}\Vert \int_{T/2}^T\mathcal{E}_i(t)dt\Vert_S\lesssim 1,\quad i=1,2.\\
\end{equation*}
\end{proposition}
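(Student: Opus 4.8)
The plan is to start from the explicit oscillatory representation of $\mathcal{F}\mathcal{N}^t[F,G,H](\xi,p)$ displayed above and to isolate the two distinct mechanisms producing decay: the \emph{spatial} oscillation carried by the hyperbolic phase $e^{it2\eta\kappa}$ in the $(\eta,\kappa)$-integral, and the \emph{temporal} oscillation carried by $e^{it(\lambda_p-\lambda_q+\lambda_r-\lambda_s)}$. Accordingly I would split the momentum sum over $\mathcal{M}$ (see~\eqref{DiscPar}) into its resonant part $\Gamma_0$ and its non-resonant complement $\mathcal{M}\setminus\Gamma_0$. On $\Gamma_0$ the temporal phase is trivial and only the spatial oscillation is available; on $\mathcal{M}\setminus\Gamma_0$ the non-resonance Assumption~\ref{As1} furnishes the quantitative lower bound~\eqref{res1}, which is the sole replacement for the exact cancellation used in~\cite{HPTV}.

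For the resonant sum I would perform a two-dimensional stationary phase in $(\eta,\kappa)$. The phase $2t\eta\kappa$ is nondegenerate with vanishing signature, so its critical point at $\eta=\kappa=0$ contributes the constant $\pi/t$ and reproduces exactly the resonant nonlinearity $\frac{\pi}{t}\mathcal{R}[F,G,H]$ of~\eqref{DeF}; the stationary-phase remainder, of size $O(t^{-2})$ after two integrations by parts (the two derivatives in $(\eta,\kappa)$ being converted into the weights $xF$, $x^2F$ controlled by $\|\cdot\|_S$ and $\|\cdot\|_{S^+}$), is placed in $\mathcal{E}_1^t$. This step follows the treatment of~\cite[Proposition~3.1]{HPTV} and yields both the pointwise bound $T^{1+\delta}\sup\|\mathcal{E}_1\|_Z\lesssim 1$ (one power of $t$ better than the main term, modulo the $t^\delta$ growth of the $S$-norm encoded in $\|\cdot\|_{X_T}$) and the $S$-integral bound.

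The non-resonant sum is where the small divisors appear and is the only genuinely new part. Writing $\lambda:=\lambda_p-\lambda_q+\lambda_r-\lambda_s$ and using $e^{it\lambda}=\frac{1}{i\lambda}\partial_t e^{it\lambda}$, I would integrate by parts in $t$. Setting
\[
\mathcal{E}_3^t:=\sum_{(p,q,r,s)\in\mathcal{M}\setminus\Gamma_0}\frac{1}{i\lambda}\,e^{it\lambda}\int_{\mathbb{R}^2}e^{it2\eta\kappa}\widehat{F}_q(\xi-\eta)\overline{\widehat{G}_r}(\xi-\eta-\kappa)\widehat{H}_s(\xi-\kappa)\,d\kappa\,d\eta,
\]
one has that $\mathcal{E}_2^t:=\partial_t\mathcal{E}_3^t$ equals the non-resonant part of $\mathcal{N}^t$ up to a term in which $\partial_t$ falls on the integrand; the latter produces $\partial_t\widehat F,\partial_t\widehat G,\partial_t\widehat H$ (absorbed through the $(1+|t|)^{1-3\delta}$ weight in $\|\cdot\|_{X_T}$) and is collected into $\mathcal{E}_1^t$. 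The decisive point is that the factor $1/\lambda$ costs, by~\eqref{res1}, a power $\nu_3(p,q,r,s)^\gamma$, i.e.\ $\gamma$ derivatives on the \emph{third largest} frequency; since that frequency always sits alongside two genuinely high-frequency factors, the loss can be distributed over the three factors carrying the most regularity and absorbed by the high Sobolev index $N\geq N_0(d,\gamma)$. This is exactly the quadrilinear mechanism quantified in Lemma~\ref{FL}, and it is what forces $N_0$ to depend on $\gamma$. The telescoping identity $\int_{T/2}^T\mathcal{E}_2^t\,dt=\mathcal{E}_3^T-\mathcal{E}_3^{T/2}$ then reduces the time-integral bounds to the pointwise estimate $T^{1/10}\sup\|\mathcal{E}_3\|_S\lesssim 1$, which comes from the same multilinear estimate together with the $t^{-1}$ decay of the $(\eta,\kappa)$-integral.

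The main obstacle is precisely this small-divisor multilinear bound: the weak norm $Z$ carries only the weight $h^1_p$ in the periodic variable, so one must check that the loss $\nu_3^\gamma$ never lands on a low-frequency factor but can always be transferred onto factors measured in $H^N$, with a margin large enough to keep the sum over $\Z^{4d}$ (constrained by $p-q+r-s=0$) summable. Once this is in place, the estimates under the stronger hypothesis~\eqref{BA+} follow by running the same decomposition with $\|\cdot\|_{X_T^+}$ in place of $\|\cdot\|_{X_T}$, the additional $x$-derivatives and moment weights in $S^+$ providing the room for the $T^{-5\delta}$ and $T^{2\delta}$ bounds.
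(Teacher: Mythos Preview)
Your high–level decomposition (resonant vs.\ non-resonant, stationary phase on $\Gamma_0$, time integration by parts on $\mathcal{M}\setminus\Gamma_0$, loss of $\nu_3^\gamma$ absorbed by $N\ge N_0(d,\gamma)$) is the right architecture and matches the paper. There is, however, a genuine gap in the non-resonant step.

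When you differentiate your $\mathcal{E}_3^t$ in $t$, $\partial_t$ does not only fall on $\widehat F,\widehat G,\widehat H$: it also hits the hyperbolic phase $e^{it\,2\eta\kappa}$ inside the $(\eta,\kappa)$-integral, producing
\[
\frac{e^{it\lambda}}{i\lambda}\int_{\mathbb{R}^2} 2i\eta\kappa\, e^{it\,2\eta\kappa}\,\widehat F_q(\xi-\eta)\overline{\widehat G_r}(\xi-\eta-\kappa)\widehat H_s(\xi-\kappa)\,d\eta\,d\kappa.
\]
This term is \emph{not} one power of $t$ better than the original; there is no gain from the time derivative, and you still carry the small divisor $1/\lambda$. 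Your proposal simply omits this contribution, and without further structure the scheme does not close.

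The paper (and~\cite{HPTV}) deal with this by inserting, \emph{before} the time integration by parts, a cutoff $\varphi(t^{1/4}\eta\kappa)$ in the $(\eta,\kappa)$-integral. This splits $\mathcal{N}_{nr}^t$ into two pieces $\mathcal{O}_1^t$ (support $|\eta\kappa|\gtrsim t^{-1/4}$) and $\mathcal{O}_2^t$ (support $|\eta\kappa|\lesssim t^{-1/4}$). On $\mathcal{O}_1^t$ one does \emph{not} integrate by parts in $t$; instead one exploits the spatial oscillation directly (this is \cite[Lemma~3.6]{HPTV}, which needs no lower bound on $|\lambda|$). On $\mathcal{O}_2^t$ one runs exactly your time integration by parts, and now the dangerous term above is harmless: the extra factor $2i\eta\kappa$ together with the localization gives $(\partial_t\mathcal{O}_2^t)$ an additional $t^{-1/4}$ compared to $\mathcal{O}_2^t$, so it can be placed in $\widetilde{\mathcal{E}}_1$. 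This is the content of Lemma~\ref{FL} and identity~\eqref{TIPP}. A secondary omission is the preliminary Littlewood--Paley truncation $Q_{\le T^{1/6}}$ in $x$ (the high-$x$-frequency piece being handled separately by~\cite[Lemma~3.2]{HPTV}); this is more routine, but it is what makes the later $Y$-norm bookkeeping and~\eqref{sy} consistent.
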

We now explain how we can prove this result. 

To begin with, for $T\geq 1$, we decompose the nonlinearity $\mathcal{N}^{t}$ according to the high and the low frequencies in the~$x$-variable
\begin{equation*}
\mathcal{N}^{t}=\sum_{\substack{A,B,C\\\max(A,B,C)\ge T^{\frac{1}{6}}}}\mathcal{N}^t[Q_AF,Q_BG,Q_CH]+\sum_{\substack{A,B,C\\\max(A,B,C)\leq T^{\frac{1}{6}}}}\mathcal{N}^t[Q_AF,Q_BG,Q_CH].
\end{equation*}
The first term is treated in~\cite[Lemma 3.2]{HPTV}. We turn to the second one, and in the sequel we assume that 
\begin{equation}\label{tronca}
F=Q_{\leq T^{1/6}}F, \quad G=Q_{\leq T^{1/6}}G, \quad H=Q_{\leq T^{1/6}}H\,.
\end{equation}
 We decompose the second term by taking into account   the resonances w.r.t to the~$y$-variable, namely
\begin{equation*} 
\sum_{\substack{A,B,C\\\max(A,B,C)\leq T^{\frac{1}{6}}}}\mathcal{N}^t[Q_AF,Q_BG,Q_CH]= \mathcal{N}_{0}^t[F,G,H]+ \mathcal{N}_{nr}^t[F,G,H],
 \end{equation*}
where $\mathcal{N}_{0}^t$ is defined by 
\begin{equation*} 
 \mathcal{F} \mathcal{N}_{0}^t[F,G,H](\xi,p):=\sum_{(p,q,r,s)\in\Gamma_0}\int_{\mathbb{R}^2}e^{it2\eta\kappa}\widehat{F}_{q}(\xi-\eta)\overline{\widehat{G}_{r}}(\xi-\eta-\kappa)\widehat{H}_{s}(\xi-\kappa)d\kappa d\eta\,.
 \end{equation*}
 The quantity $\mathcal{N}_{0}^t$ contains the resonant interactions of the nonlinearity. Observe that under Assumption~\ref{As1} there are much fewer resonances than in the case $V=0$, therefore the analysis of~\cite{HPTV} also applies in our context. More precisely, the arguments of~\cite[Lemma 3.7 and Remark~3.8]{HPTV}   show that this term can be written 
\begin{equation*}
\mathcal{N}_{0}^t[F,G,H]=\frac{\pi}{t}\mathcal{R}[F, G,H]+\mathcal{E}^t[F,G,H],
\end{equation*}
where $\mathcal{E}^t$ satisfies the estimates of Proposition~\ref{FL1}. We also refer to the end of the proof of~\cite[Proposition~3.1]{HPTV} for more details.\medskip

The contribution of $\mathcal{N}_{nr}^t$ is slightly different in our case than in the case considered in~\cite{HPTV}, because of the presence of small denominators. In this context, we are able to  prove the following

\begin{lemma}\label{FL}
For $T\geq 1$,  assume that $F$, $G$, $H$: $\mathbb{R} \to S$ satisfy~\eqref{le} and~\eqref{tronca}. Then for $t\in [T/4,T]$, we can write
$$
 \mathcal{N}_{nr}^t[F(t),G(t),H(t)]=\widetilde{\mathcal{E}}_1^t+\mathcal{E}_2^t,
$$
where it holds that, uniformly in $T\geq 1$,
\begin{equation*}
T^{1+2\delta}\sup_{T/4\leq t\leq T}\Vert \widetilde{\mathcal{E}_1}(t)\Vert_S\lesssim 1,\quad
T^{1/10}\sup_{T/4\leq t\leq T}\Vert \mathcal{E}_3(t)\Vert_S\lesssim 1,
\end{equation*}
where $\mathcal{E}_2(t)=\partial_t\mathcal{E}_3(t)$.
Assuming in addition that~\eqref{BA+} holds we have 
\begin{equation*}
T^{1+2\delta}\sup_{T/4\leq t\leq T}\Vert \widetilde{\mathcal{E}_1}(t)\Vert_{S^+}\lesssim 1,
\qquad T^{1/10}\sup_{T/4\leq t\leq T}\Vert \mathcal{E}_3(t)\Vert_{S^+}\lesssim 1.
\end{equation*}
\end{lemma}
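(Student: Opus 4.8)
The plan is to analyze the non-resonant contribution $\mathcal{N}_{nr}^t$, which by definition sums over momentum-zero quadruples $(p,q,r,s)\in\mathcal{M}\setminus\Gamma_0$, i.e. those with $\{|p|,|r|\}\neq\{|q|,|s|\}$. The key structural difference from \cite{HPTV} is that here the phase $\lambda_p-\lambda_q+\lambda_r-\lambda_s$ need not vanish but, by Assumption~\ref{As1}, is bounded below by $c\,\nu_3(p,q,r,s)^{-\gamma}$. The natural strategy is \emph{normal form / integration by parts in time}: since the phase is nonzero, we write $e^{it\omega}=\frac{1}{i\omega}\partial_t e^{it\omega}$ where $\omega=\lambda_p-\lambda_q+\lambda_r-\lambda_s$, and integrate by parts to trade an oscillatory factor for an extra power of $t^{-1}$ (modulo the boundary term and the term where $\partial_t$ hits the slowly varying profiles $\widehat F,\widehat G,\widehat H$). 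This produces the splitting $\mathcal{N}_{nr}^t=\widetilde{\mathcal{E}}_1^t+\mathcal{E}_2^t$ with $\mathcal{E}_2^t=\partial_t\mathcal{E}_3^t$: the term $\mathcal{E}_3^t$ is the boundary contribution carrying the factor $1/(i\omega)$, and $\widetilde{\mathcal{E}}_1^t$ collects the genuinely time-integrable pieces (those gaining the extra $t^{-1}$ from the $\partial_t$ falling on the profiles via the $X_T$-control of $\partial_t F$, plus the already-integrable $\eta\kappa$-oscillation remainder).

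First I would set up the frequency truncation \eqref{tronca}, so that all $x$-frequencies are $\lesssim T^{1/6}$, and record that this controls the $\xi$-support; this is what will eventually convert spatial-regularity gains into the stated powers of $T$. Next I would perform the integration by parts in $t$ on the oscillatory factor $e^{it\omega}$ inside $\mathcal{F}\mathcal{N}_{nr}^t$. The crucial point is to absorb the small divisor $1/\omega$: by Assumption~\ref{As1} we have $|1/\omega|\leq c^{-1}\nu_3(p,q,r,s)^{\gamma}$, so dividing by $\omega$ costs at most $\nu_3^{\gamma}$, i.e.\ a polynomial loss in the \emph{third largest} frequency. This is precisely where the hypothesis is tailored to quadrilinear estimates: in any product of four factors two of the frequencies may be large and unconstrained, but the loss is governed only by the third largest, which can be dominated by the $H^N$-regularity of two of the three input functions. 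I would then estimate each resulting piece in $S$ (and in $S^+$), bounding the small-divisor loss $\nu_3^{\gamma}$ by extracting $\gamma$ derivatives from two low-frequency factors; this is where the largeness of $N_0=N_0(d,\gamma)$ enters, since we need $N$ large compared to $\gamma$ to spend the regularity.

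The main obstacle will be controlling the small-divisor loss $\nu_3^{\gamma}$ in the multilinear sum without destroying summability over $\Z^d$ in the $y$-frequencies. Concretely, after integration by parts the summand carries a weight $\nu_3(p,q,r,s)^{\gamma}$, and one must show that this is summable against the $h^1_p$/$H^N_y$ norms of the profiles together with the zero-momentum constraint $p-q+r-s=0$. The strategy is to order the frequencies, assign the two largest to the two factors measured in high Sobolev norm $H^N$ (so $\nu_3^{\gamma}$ is absorbed by the remaining regularity once $N>N_0$), and measure the two smallest in the weaker $Z$-norm; the momentum constraint then removes one free summation, and the Cauchy-Schwarz/convolution structure closes the bound. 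I expect the bookkeeping of which factor receives which frequency, uniformly over all orderings compatible with $\{|p|,|r|\}\neq\{|q|,|s|\}$, to be the delicate part, together with verifying that the extra $t^{-1}$ from the integration by parts combines with the $X_T$-bounds on $\partial_t F,\partial_t G,\partial_t H$ to yield the sharp $T^{1+2\delta}$ and $T^{1/10}$ decay rates. The remaining estimates for $\mathcal{E}_3$ and the $S^+$ variants follow the same scheme with the analogous higher-regularity norms, exactly as in \cite[Lemma 3.7]{HPTV}.
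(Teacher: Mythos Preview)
Your strategy is the paper's: split off the high-$\eta\kappa$ piece (the analogue of \cite[Lemma~3.6]{HPTV}), then on the remaining near-stationary part integrate by parts in $t$ via $e^{it\omega}=\frac{1}{i\omega}\partial_t e^{it\omega}$, and absorb the small divisor $|\omega|^{-1}\lesssim \nu_3^{\gamma}$ using the $H^N$-regularity of two of the three inputs (this is where $N>N_0(d,\gamma)$ enters, exactly as you say).

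There is one genuine imprecision you should fix. You write that the integration by parts in $t$ ``trades an oscillatory factor for an extra power of $t^{-1}$''; it does not. It produces only the factor $1/(i\omega)$, which by itself carries no decay in $t$. The bound $T^{1/10}\|\mathcal{E}_3\|_S\lesssim 1$ therefore cannot come from the integration by parts: it comes from the one-dimensional stationary-phase estimate on the $\eta,\kappa$-integral itself, namely the bound
\[
\big\|\mathcal{O}_2^t[f,g,h]\big\|_{L^2_\xi}\ \lesssim\ (1+|t|)^{-1+\delta}\,\|f\|_{L^2_x}\|g\|_Y\|h\|_Y
\]
(this is \cite[(3.20)]{HPTV}), applied after the cutoff $\varphi(t^{1/4}\eta\kappa)$ has localized to $|\eta\kappa|\lesssim t^{-1/4}$. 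This same cutoff is what makes the term where $\partial_t$ lands on $e^{2it\eta\kappa}$ harmless (it gains an extra $t^{-1/4}$); without it, that term carries an unbounded factor $\eta\kappa$ and your scheme would not close. So the $\mathcal{O}_1^t/\mathcal{O}_2^t$ split must be performed \emph{before} the time integration by parts, not treated as an afterthought ``remainder'' inside $\widetilde{\mathcal E}_1$. Once this is in place, your handling of the small divisor --- bounding $\nu_3^{\gamma}\le |r|^{\gamma}+|s|^{\gamma}$, placing the loss on the two $Y$-norm factors, and summing via $\sum_p\||p|^{\gamma}F_p\|_Y\lesssim\|F\|_S$ together with Young's inequality on $\mathcal{M}$ --- is exactly what the paper does.
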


\begin{proof} To begin with, let us recall the following estimate
\begin{equation}\label{sp}
\Big\|
\sum_{(q,r,s)\,:\,(p,q,r,s)\in{\mathcal M}}
c^1_qc^2_rc^3_s\Big\|_{\ell^2_{p}}
\lesssim
\min_{\sigma\in\mathfrak{S}_3}\|c^{\sigma(1)}\|_{\ell^2_p}\|c^{\sigma(2)}\|_{\ell^1_p}\|c^{\sigma(3)}\|_{\ell^1_p}\,,
\end{equation}
which a direct consequence of Young's convolution estimates.\medskip

To prove the lemma, we start by decomposing $\mathcal{N}_{nr}^t$ along the non-resonant level sets as follows: let $\varphi \in \mathcal{C}^{\infty}_{0}(\R)$ be such that $\varphi\equiv 1$ near 0, then define
\begin{align}
\mathcal{O}^t_1[f,g,h](\xi)&:=\int_{\mathbb{R}^2}e^{2it\eta\kappa}(1-\varphi(t^{\frac{1}{4}}\eta\kappa))\widehat{f}(\xi-\eta)\overline{\widehat{g}}(\xi-\eta-\kappa)\widehat{h}(\xi-\kappa)d\eta d\kappa,
\nonumber\\
\mathcal{O}^t_2[f,g,h](\xi)&:=\int_{\mathbb{R}^2}e^{2it\eta\kappa}\varphi(t^{\frac{1}{4}}\eta\kappa)\widehat{f}(\xi-\eta)\overline{\widehat{g}}(\xi-\eta-\kappa)\widehat{h}(\xi-\kappa)d\eta d\kappa.\nonumber
\end{align}
 \begin{equation}\label{decomp}
 \mathcal{N}_{nr}^t[F,G,H] = \mathcal{N}_{nr,1}^t[F,G,H]+ \mathcal{N}_{nr,2}^t[F,G,H]
 \end{equation}
\begin{equation*}
\mathcal{F} \mathcal{N}_{nr,j}^t[F,G,H](\xi,p)=
 \sum_{(p,q,r,s)\in\Gamma_{nr}}e^{it\omega} \mathcal{O}^t_j[F_q,G_r,H_s](\xi) , 
\end{equation*} 
where we used the notation $\Gamma_{nr}=\mathcal{M}\backslash \Gamma_0$ for the non resonant terms in $\mathcal{M}$.\medskip

$\bullet$ The first term $\mathcal{N}_{nr,1}^t[F,G,H]$      in~\eqref{decomp} can be controlled exactly as~\cite{HPTV}, this is the content of~\cite[Lemma 3.6]{HPTV}. \medskip    

$\bullet$  We now control the term $\mathcal{N}_{nr,2}^t[F,G,H]$. In the sequel we write $\big\{F,G,H\big\}=\big\{F^{a},F^{b},F^{c}\big\}$ and we assume that~\eqref{tronca} holds true.  Then we  define
\begin{equation*}
\Vert f\Vert_Y:=\Vert\langle x\rangle^{\frac{9}{10}}f\Vert_{L^2_x}+\Vert f\Vert_{H^{\frac{3N}{4}}_x},
\end{equation*}
and we note that  if $N>N_{0}(d,\gamma)$ large enough  we have
\begin{equation}\label{sy}
\sum_{p\in\mathbb{Z}^d}\Vert |p|^{\gamma} F_p\Vert_Y\lesssim \Vert F\Vert_S.
\end{equation}
 Let $t\geq T/4$. By~\cite[Estimate (3.20)]{HPTV} we have the bound
 \begin{equation}\label{CEC}
 \Vert \mathcal{O}^t_2 [F^a,F^b,F^c]\Vert_{L^2_{\xi}}  \lesssim (1+\vert t\vert)^{-1+\delta}\min_{\sigma\in\mathfrak{S}_3}\Vert F^{\sigma(a)}\Vert_{L^2_x}\Vert F^{\sigma(b)}\Vert_{Y}\Vert F^{\sigma(c)}\Vert_{Y}.
\end{equation}

We write
\begin{multline}\label{TIPP}
 e^{it\omega}\mathcal{O}^t_2[f,g,h]=\partial_t\Big( \frac{e^{it\omega}}{i\omega}\mathcal{O}^t_2[f,g,h]\Big)
-\frac{e^{it\omega}}{\omega}\left(\partial_t\mathcal{O}^t_2\right)[f,g,h]
\\
-\frac{e^{it\omega}}{i\omega}\mathcal{O}^t_2[\partial_tf,g,h]
-\frac{e^{it\omega}}{i\omega}\mathcal{O}^t_2[f,\partial_tg,h]-\frac{e^{it\omega}}{i\omega}\mathcal{O}^t_2[f,g,\partial_th],
\end{multline}
where
$$
\left(\partial_t\mathcal{O}^t_2\right)[f,g,h](\xi):=\int_{\mathbb{R}}\partial_t\Big(e^{2it\eta\kappa}\varphi(t^{\frac{1}{4}}\eta\kappa)\Big)
\widehat{f}(\xi-\eta)\overline{\widehat{g}}(\xi-\eta-\kappa)\widehat{h}(\xi-\kappa)d\eta d\kappa.
$$
We define $\mathcal{E}_3$ by
\begin{equation*}
 \mathcal{F}\mathcal{E}_3(\xi,p):= \sum_{(p,q,r,s)\in\Gamma_{nr}}\frac{e^{it\omega}}{i\omega}\mathcal{O}^t_2[F_q,G_r,H_s](\xi).
\end{equation*}
We now estimate the contribution of each term in~\eqref{TIPP}. Here we face an additional difficulty, since $|\omega|$ is not bounded from below by a constant as in~\cite{HPTV}. Therefore we will need Assumption~\ref{As1}.\medskip

$\star$ We first consider the term $ \mathcal{F}_{x}\mathcal{E}_3$.   By~\cite[Lemma~7.4]{HPTV} it is enough to prove that 
\begin{equation*} 
\Vert  \mathcal{E}_{3}\Vert_{L^2_{x,y}}\lesssim (1+\vert t\vert)^{-1+\delta} \min_{\sigma\in\mathfrak{S}_3}\Vert F^{\sigma(a)}\Vert_{L^2_{x,y}}\Vert F^{\sigma(b)}\Vert_S\Vert F^{\sigma(c)}\Vert_S.
\end{equation*}
By symmetry, the previous inequality will be implied by  
\begin{equation}\label{bound1} 
\Vert  \mathcal{E}_{3}\Vert_{L^2_{x,y}}\lesssim (1+\vert t\vert)^{-1+\delta} \Vert F^{a}\Vert_{L^2_{x,y}}\Vert F^{b}\Vert_S\Vert F^{c}\Vert_S,
\end{equation}
and we now prove this estimate. Let  $K\in L^2_{\xi,p}(\mathbb{R}\times \mathbb{Z}^d)$, then 
\begin{eqnarray*} 
\langle K,\mathcal{F}\mathcal{E}_{3} \rangle_{L^2_{\xi,p}\times L^2_{\xi,p}}&\le&\sum_{(p,q,r,s)\in\Gamma_{nr}}
  \big\vert\langle K_p,\frac{1}{\omega}\mathcal{O}^t_2[F^{a}_q,F^{b}_r,F^{c}_s]\rangle_{L^2_\xi\times L^2_\xi}\big\vert\\
  &\le&\sum_{(p,q,r,s)\in\Gamma_{nr}}
   \Vert K_p\Vert_{L^2_\xi}      \big\Vert \frac{1}{\omega}\mathcal{O}^t_2[F^{a}_q,F^{b}_r,F^{c}_s]   \big \Vert_{L^2_\xi}    
  \end{eqnarray*}
Then, using~\eqref{res1}, we can assume that on $\Gamma_{nr}$, $|\omega|\geq c|r|^{-\gamma}$ or  $|\omega|\geq c|s|^{-\gamma}$, therefore by~\eqref{CEC}
\begin{eqnarray*} 
\langle K,\mathcal{F}\mathcal{E}_{3} \rangle_{L^2_{\xi,p}\times L^2_{\xi,p}} 
&   \lesssim&\sum_{(p,q,r,s)\in\Gamma_{nr}}\Vert K_p\Vert_{L^2_\xi}   \big( \| \mathcal{O}^t_2[F^{a}_q,|r|^{\gamma}F^{b}_r,F^{c}_s]\|_{L^{2}_{\xi}}+ \| \mathcal{O}^t_2[F^{a}_q,F^{b}_r,|s|^{\gamma}F^{c}_s]\|_{L^{2}_{\xi}}\big)\\
    &\lesssim& (1+\vert t\vert)^{-1+\delta}\sum_{(p,q,r,s)\in\Gamma_{nr}}\Vert K_p\Vert_{L^2_\xi}     \Vert F^{a}_q\Vert_{L^2_x}\big\Vert |r|^{\gamma}F^{b}_r\big\Vert_{Y}\Vert |s|^{\gamma}F^{c}_s\Vert_{Y} .
\end{eqnarray*}
 The estimate~\eqref{bound1} then follows from an application of~\eqref{sp} and~\eqref{sy}.\medskip

$\star$ Since $(1+\vert t\vert)^{1/4}(\partial_t\mathcal{O}_{2}^{t})$ satisfies similar estimates as $\mathcal{O}_{2}^{t}$, the second term in the right hand-side of~\eqref{TIPP} can be estimated as $\mathcal{E}_{3}$ and is therefore  acceptable.\medskip

$\star$ The contribution of the terms in the second line of~\eqref{TIPP} is estimated as $\mathcal{E}_{3}$ and by using the definition of the $X_{T^\ast}$ norm.\medskip

This ends the estimation of $\mathcal{O}^{t}_{2}$ and the proof of Lemma~\ref{FL}.
\end{proof}

\section{The resonant system} 
In this section, we study the resonant system which dictates the dynamics of~\eqref{NLS}. For $p\in \Z$, we consider the system
\begin{equation}\label{RS}
i\partial_ta_p(t)=\sum_{(p,q,r,s)\in\Gamma_0}a_{q}(t)\overline{a_{r}(t)}a_{s}(t)=:R[a(t),a(t),a(t)]_p.
\end{equation}
This is a Hamiltonian system for the symplectic form
\begin{equation*}
\Omega(\{a_p\},\{b_q\})={\rm Im}\Big[\sum_{p\in\mathbb{Z}^d}\overline{a_p}b_p\Big]={\rm Re}\langle -i\{a_p\},\{b_p\}\rangle_{\ell^2_p\times \ell^2_p}
\end{equation*}
and Hamiltonian
\begin{equation*}
\langle R(a,a,a),a\rangle_{\ell^2_p\times \ell^2_p}=\sum_{(p,q,r,s)\in\Gamma_0}a_p\overline{a_q}a_r\overline{a_s}.
\end{equation*}

The next result gives a geometrical  description of the resonant set, but will not be used in the sequel. 

\begin{lemma} 
The points $(p,q,r,s)\in\Gamma_0$ if and only if $\{p,q,r,s\}$ are the 
successive edges of a rectangle such that the origin belongs to the 
perpendicular bisector hyperplane of one of its vertices.
\end{lemma}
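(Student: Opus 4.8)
The plan is to turn the two arithmetic alternatives defining $\Gamma_0$ into statements of Euclidean geometry, via the standard parametrisation of a parallelogram by its centre and its two half-diagonals. I would start from the explicit description of $\Gamma_0$ valid under Assumption~\ref{As1}: a quadruple $(p,q,r,s)\in\mathcal M$ belongs to $\Gamma_0$ if and only if $(|p|=|q|$ and $|r|=|s|)$, or $(|p|=|s|$ and $|q|=|r|)$. The defining relation of $\mathcal M$, namely $p-q+r-s=0$, is equivalent to $p+r=q+s$, i.e. the segments $[p,r]$ and $[q,s]$ have the same midpoint; this is exactly the condition that $p,q,r,s$ be the consecutive vertices of a (possibly degenerate) parallelogram. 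Accordingly I would set $c:=\tfrac12(p+r)=\tfrac12(q+s)$ and write $p=c+u$, $q=c+v$, $r=c-u$, $s=c-v$, so that $\pm u,\pm v$ are the half-diagonals, while $u-v=p-q$ and $u+v=p-s$ give the directions of the two pairs of opposite sides.

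Next I would expand the norms: $|p|^2=|c|^2+2\,c\cdot u+|u|^2$, and similarly for $q,r,s$. In the first alternative the equations $|p|^2=|q|^2$ and $|r|^2=|s|^2$ read $2c\cdot u+|u|^2=2c\cdot v+|v|^2$ and $-2c\cdot u+|u|^2=-2c\cdot v+|v|^2$; adding them gives $|u|=|v|$ and subtracting them gives $c\cdot(u-v)=0$. The equality $|u|=|v|$ says that the two diagonals have equal length, which is precisely the criterion for a parallelogram to be a rectangle, and then $(u+v)\cdot(u-v)=|u|^2-|v|^2=0$ confirms that the adjacent sides are orthogonal. Since $u-v$ is the common direction of the opposite sides $[p,q]$ and $[r,s]$, the relation $c\cdot(u-v)=0$ says that the origin lies on their (common) perpendicular bisector hyperplane $\{x:(x-c)\cdot(u-v)=0\}$, which is one of the two hyperplanes of symmetry of the rectangle. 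The second alternative $|p|=|s|$, $|q|=|r|$ is handled identically and produces $|u|=|v|$ together with $c\cdot(u+v)=0$, i.e. the same rectangle but with the origin on the perpendicular bisector hyperplane of the other pair of opposite sides $[s,p]$, $[q,r]$. As each manipulation is an equivalence, reading the computation backwards yields the converse: given a rectangle ($|u|=|v|$) with the origin on one of its two symmetry hyperplanes, the corresponding pair of norm equalities holds, so $(p,q,r,s)\in\Gamma_0$.

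All of this is elementary algebra, so the only delicate points — and the ones I would flag as the crux — are geometric rather than computational. One must verify carefully that the equal-diagonal condition $|u|=|v|$ is genuinely equivalent to the parallelogram being a rectangle, and that in a rectangle the perpendicular bisector of a side coincides with that of the opposite side (so that the single scalar relation $c\cdot(u-v)=0$, respectively $c\cdot(u+v)=0$, simultaneously encodes both norm equalities of the alternative and identifies a bona fide symmetry hyperplane). One should also match the two arithmetic alternatives with the two distinct symmetry hyperplanes, and record the degenerate configurations $u=\pm v$ (collinear points) as limiting instances of the statement.
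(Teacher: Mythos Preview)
Your argument is correct and complete. It differs from the paper's proof mainly in the geometric mechanism used: you parametrise the parallelogram by its centre $c$ and half-diagonals $u,v$, reduce the two norm equalities to $|u|=|v|$ together with $c\cdot(u\mp v)=0$, and invoke the equal-diagonals criterion for a rectangle. The paper instead argues directly from the norm equalities: $|p|=|q|$ and $|r|=|s|$ place the origin on the perpendicular bisector hyperplanes of the two parallel sides $[p,q]$ and $[r,s]$; these hyperplanes are parallel and share a point, hence coincide, and in a parallelogram the coincidence of these two bisectors forces the adjacent sides to be orthogonal. Your route is more explicit and makes the equivalence transparent step by step (and handles the converse and the degenerate case cleanly), while the paper's route is shorter and more purely synthetic but leaves the last implication (coinciding bisectors $\Rightarrow$ rectangle) to the reader.
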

\begin{proof}
The claim follows from elementary geometry: the condition $p-q+r-s = 0$ 
imposes that the four points form a parallelogram. The resonance 
condition implies that the origin belongs to the perpendicular bisectors 
of two parallel vertices. Since these hyperplanes are parallel and 
intersect at the origin, they must coincide, and this implies the 
orthogonality of the vertices.
\end{proof}

\subsection{First integrals and wellposedness}
\begin{lemma}\label{LemDST}
Let $R$ be defined as in~\eqref{RS}. For every sequences $(a^1)_p$, $(a^2)_p$, $(a^3)_p$ indexed by $\mathbb{Z}^d$ with $1\leq d\leq 4$ 
\begin{equation}\label{StriEst2}
\Vert R[a^1,a^2,a^3]\Vert_{\ell^{2}_p}\leq C_d\,
\min_{\sigma \in \mathfrak{S}_{3}} \Vert a^{\sigma(1)}\Vert_{\ell^{2}_p}\Vert a^{\sigma(2)}\Vert_{h^{1}_p}\Vert a^{\sigma(3)}\Vert_{h^{1}_p}.
\end{equation}
and consequently, for any $s \geq 1$,
\begin{equation}\label{StriEst}
\Vert R[a^1,a^2,a^3]\Vert_{h^s_p}\leq C_{\sigma,d}\,
\sum_{\sigma\in\mathfrak{S}_3}\Vert a^{\sigma(1)}\Vert_{h^s_p}\Vert a^{\sigma(2)}\Vert_{h^{1}_p}\Vert a^{\sigma(3)}\Vert_{h^{1}_p}.
\end{equation}
\end{lemma}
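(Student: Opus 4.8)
The plan is to deduce the weighted estimate \eqref{StriEst} from the base case \eqref{StriEst2}, and to prove the latter by exploiting the explicit description of $\Gamma_0$ recalled above (under Assumption~\ref{As1}).

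\emph{Reduction of \eqref{StriEst} to \eqref{StriEst2}.} On $\Gamma_0\subset\mathcal M$ the momentum relation $p=q-r+s$ forces $\langle p\rangle^s\lesssim \langle q\rangle^s+\langle r\rangle^s+\langle s\rangle^s$ for every $s\ge 1$. Inserting this into the definition of $R$ and distributing the weight one factor at a time, I split $R[a^1,a^2,a^3]_p$ into three pieces in which $\langle\cdot\rangle^s$ sits respectively on $a^1$, $a^2$ or $a^3$. To each piece I apply \eqref{StriEst2}, choosing in the minimum the permutation that places the weighted (hence $h^s_p$) factor in the $\ell^2_p$ slot and the two remaining factors in the $h^1_p$ slots. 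Summing the three contributions produces exactly the symmetrized right-hand side of \eqref{StriEst}. This step is routine once \eqref{StriEst2} is available.

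\emph{Proof of \eqref{StriEst2}.} First I observe that for $d=1$ the estimate is immediate from Young's inequality \eqref{sp} together with the Sobolev embedding $h^1(\Z)\hookrightarrow\ell^1(\Z)$, so the whole difficulty is concentrated in $2\le d\le 4$, where the restriction to the resonant set is essential. I argue by duality: testing $R[a^1,a^2,a^3]$ against $b\in\ell^2_p$ gives a quadrilinear sum over $\Gamma_0$. Using the explicit form of $\Gamma_0$, I split this sum into the two cases $(|p|=|q|,\ |r|=|s|)$ and $(|p|=|s|,\ |q|=|r|)$; since exchanging the summation indices $q\leftrightarrow s$ swaps the two cases together with the roles of $a^1$ and $a^3$, it suffices to treat the first one. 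There I introduce the intermediate momentum $n:=p+r=q+s$ and organize the sum according to $n$ and to the common radii $|p|=|q|$ and $|r|=|s|$; the two norm constraints then confine the pairs $(p,q)$ and $(r,s)$ to the intersection of a sphere with the hyperplane $\{(p-q)\cdot n=0\}$. A Cauchy--Schwarz applied after this organization reduces matters to counting lattice points on such sphere--hyperplane intersections.

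\emph{Main obstacle.} The crux is precisely this counting: the gain over the naive Young bound must be quantified so that the surplus factor, a power of the largest frequency coming from the number of resonant configurations attached to a given output index, is exactly absorbed by the two $h^1_p$ weights. This is where the hypothesis $d\le 4$ enters, and where the $h^1_p$ (rather than the $\ell^1_p$ produced by \eqref{sp}) norms on the right of \eqref{StriEst2} are consumed; equivalently, one may recognize the resonant sum as a discrete bilinear, $L^4$-type expression on $\mathbb{T}^d$ and invoke the corresponding counting estimate. All remaining manipulations are bookkeeping.
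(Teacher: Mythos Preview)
Your reduction of \eqref{StriEst} to \eqref{StriEst2} via the momentum relation and the Leibniz distribution of the $\langle\cdot\rangle^s$ weight is exactly what the paper does (citing \cite[Lemma~7.4]{HPTV}), so that part matches.

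For \eqref{StriEst2} the two arguments converge at the same endpoint---the discrete $L^4$/Strichartz counting lemma, which is precisely \cite[Lemma~7.1]{HPTV}---but the paper gets there in one line that you skip: it simply notes that on $\Gamma_0$ the constraints $\{|p|=|q|,\ |r|=|s|\}$ or $\{|p|=|s|,\ |q|=|r|\}$ force $|p|^2+|r|^2=|q|^2+|s|^2$, so $\Gamma_0$ is contained in the \emph{standard} cubic-NLS resonant set, and the quadrilinear sum is dominated by the one already estimated in \cite{HPTV}. Your route instead exploits the finer structure of $\Gamma_0$ (the two-case split, the intermediate momentum $n$, the hyperplane $(p-q)\cdot n=0$) and reduces to a sphere--hyperplane lattice count; this is correct and is essentially how the Strichartz lemma itself is proved, but it is work you do not need to redo once you observe the set inclusion. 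In short: your argument is sound, but the paper's shortcut is that the nonresonant $\Gamma_0$ is a \emph{subset} of the resonant set treated in \cite{HPTV}, so the base estimate is inherited wholesale rather than re-derived.
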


\begin{proof} 
Let $a^0\in \ell^{2}_p$ and compute 
\begin{equation*}
\vert \<a^0, R[a^1,a^2,a^3]\>_{\ell^{2}_{p}}\vert \leq \sum_{(p,q,r,s)\in \Gamma_0}\vert a^0_p\vert  \vert a^1_q\vert \vert a^2_r\vert \vert a^3_s\vert .
\end{equation*}
Then~\eqref{StriEst2} follows from~\cite[Lemma 7.1]{HPTV}, since 
\begin{equation*}
\sum_{(p,q,r,s) \in \Gamma_{0}} \vert a^{0}_{p} \vert \vert a^{1}_{q} \vert \vert a^{2}_{r} \vert \vert a^{3}_{s} \vert \leq C \sum_{\substack{p+r = q+s\\ \vert p\vert^{2}+\vert r \vert^{2} = \vert q \vert^{2} +\vert s \vert^{2}}} \vert a^{0}_{p} \vert \vert a^{1}_{q} \vert \vert a^{2}_{r} \vert \vert a^{3}_{s} \vert\,.
\end{equation*}
Estimate~\eqref{StriEst} comes from the Leibniz rule proved in~\cite[Lemma 7.4]{HPTV}.
\end{proof}
\medskip

The resonant system is well defined for initial data in $h^{1}_p$:
\begin{lemma} 
Let $1\leq d\leq 4$. For any $ a(0)\in h^{1}_p$, there exists a unique global solution ${u\in \mathcal{C}^1(\mathbb{R}; h^{1}_p)}$ of~\eqref{RS}.
In addition, if $a(0)\in h^s_p$ for $s \geq1$, then the solution belongs to $\mathcal{C}^1(\mathbb{R};h^s_p)$, and in this case the $h^s_p$ norms are constants of motion. 
\end{lemma}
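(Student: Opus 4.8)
The plan is to prove global wellposedness in $h^1_p$ by a standard local-then-global argument, using the multilinear estimate~\eqref{StriEst2} from Lemma~\ref{LemDST} to run a fixed point, and then using the conserved quantities to prevent blowup. First I would recast~\eqref{RS} in integral form as $a(t)=a(0)-i\int_0^t R[a(s),a(s),a(s)]\,ds$ and set up a contraction in the space $\mathcal{C}([0,T^\ast];h^1_p)$. The estimate~\eqref{StriEst} with $s=1$ gives $\Vert R[a,a,a]\Vert_{h^1_p}\lesssim \Vert a\Vert_{h^1_p}^3$, and the trilinearity yields the corresponding Lipschitz bound for differences; since the nonlinearity is a smooth (polynomial) map on $h^1_p$, the Picard iteration converges on a time interval $T^\ast$ depending only on $\Vert a(0)\Vert_{h^1_p}$. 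This produces a unique maximal solution, and because $R$ is a bounded trilinear map on $h^1_p$, the solution is automatically $\mathcal{C}^1$ in time by differentiating the Duhamel formula.

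To upgrade to a global solution, I would exploit the Hamiltonian structure. The system~\eqref{RS} is Hamiltonian for the symplectic form $\Omega$ with Hamiltonian $\langle R(a,a,a),a\rangle_{\ell^2_p\times\ell^2_p}$, so this energy is conserved; more importantly, one checks that the $\ell^2_p$ mass $\sum_p|a_p|^2$ is conserved, and in fact each \emph{level-set mass} $\sum_{|p|=n}|a_p|^2$ is separately conserved, which is exactly the structural feature that distinguishes the nonresonant system from the resonant one in~\cite{HPTV}. Indeed, on $\Gamma_0$ the resonance condition forces $\{|p|,|r|\}=\{|q|,|s|\}$, so the nonlinearity only couples frequencies lying on the same spheres $\{|p|=n\}$; summing the equation against $\overline{a_p}$ over a fixed sphere and taking imaginary parts makes the nonlinear contribution vanish by the symmetry of $\Gamma_0$. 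Since $\Vert a\Vert_{h^s_p}^2=\sum_n(1+n^2)^s\sum_{|p|=n}|a_p|^2$ is a weighted sum of these conserved sphere-masses, every $h^s_p$ norm (and in particular $h^1_p$) is conserved along the flow.

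With the $h^1_p$ norm conserved, the local existence time $T^\ast$ is bounded below uniformly along the evolution, so the solution extends to all of $\mathbb{R}$, giving the global $\mathcal{C}^1(\mathbb{R};h^1_p)$ solution. The same fixed-point argument run in $h^s_p$ using~\eqref{StriEst}, combined with persistence of regularity (if $a(0)\in h^s_p$ then the $h^s_p$ norm stays finite on the existence interval, which is global since it is controlled by the already-global $h^1_p$ norm), gives the $\mathcal{C}^1(\mathbb{R};h^s_p)$ statement, and conservation of the $h^s_p$ norm follows from the sphere-mass argument above applied with weight $(1+n^2)^s$.

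The main obstacle I anticipate is establishing the conservation of the individual sphere-masses cleanly, since this is what replaces the single energy conservation used in the resonant case and is the crux of why the nonresonant dynamics cannot transfer energy between frequency levels. The key computation is to verify that $\mathrm{Im}\sum_{|p|=n}\overline{a_p}R[a,a,a]_p=0$ using the structure of $\Gamma_0$ under Assumption~\ref{As1}; this is a symmetry argument rather than a hard estimate, but it must be set up carefully so that the terms cancel in pairs. Once this is in hand, the local theory and the globalization are routine given Lemma~\ref{LemDST}.
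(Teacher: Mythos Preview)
Your proposal is correct and follows essentially the same route as the paper: local well-posedness via the trilinear estimate of Lemma~\ref{LemDST} and a contraction argument, globalization by conservation laws, and conservation of each sphere-mass $\sum_{|p|^2=N}|a_p|^2$ through the pairing symmetry of $\Gamma_0$, which yields conservation of all $h^s_p$ norms. The only cosmetic differences are that the paper indexes the spheres by $|p|^2=N$ rather than $|p|=n$ (more natural since $|p|$ need not be an integer) and first invokes mass and energy conservation to globalize in $h^1_p$ before deducing the finer sphere-mass conservation, whereas you go directly through the sphere-masses; both orderings are fine.
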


It would be interesting to know whether the equation~\eqref{RS} is well posed in $H^{1}$  for any $d\geq 1$. One can also ask if the equation~\eqref{RS} is wellposed in $L^{2}$. Using the conservation laws, it is quite easy to construct solutions in $L^{2}$ by the means of compactness arguments, but the uniqueness is unclear.

\begin{proof}
 From Lemma \ref{LemDST} we observe that the mapping $ a \mapsto R\left[ a,a,a\right]$ is locally Lipschitz in $h^{1}_{p}$ uniformly on bounded sets. A contraction mapping argument gives local well-posedness in $h^{s}_{p}$ for any $s \geq1$, which is extended to a global statement in $h^{1}_{p}$ by the conservation of mass
 \begin{equation}\label{conservmass}
 \mathrm{mass}(a) = \sum_{p \in \Z^{d}} \vert a_{p} \vert^{2}
 \end{equation}
 and the conservation of energy 
  \begin{equation}\label{conservener}
 \mathrm{energy}(a) = \sum_{p \in \Z^{d}} \vert p \vert^{2} \vert a_{p} \vert^{2}.
 \end{equation}
 Finally observe that the quantities 
\begin{equation*} 
 A_{N}(a) =  \Big( \sum_{\substack{p \in \Z^{d}\\ \vert p\vert^{2}=N}} \vert a_{p}\vert^{2} \Big)^{{\frac 12}}
 \end{equation*}
 are also quantities conserved by the dynamics of~\eqref{RS}: indeed, we have
  \begin{eqnarray*}
 \frac{d}{dt} \big( A^{2}_{N}(a) \big) &=& \sum_{\substack{p \in \Z^{d}\\ \vert p \vert^{2} = N}} \big(\partial_{t} a_{p} \overline{a_{p}} + a_{p} \partial_{t}\overline{a_{p}}\big)\\
 &  = & \sum_{\substack{p \in \Z^{d}\\ \vert p \vert^{2} = N}} \Big( -i \sum_{\substack{(q,r,s),\\ (p,q,r,s) \in \Gamma_{0}}} \overline{a_{p}}a_{q} \overline{a_{r}} a_{s} + i \sum_{\substack{(q,r,s),\\ (p,q,r,s) \in \Gamma_{0}}}  a_{p}\overline{a_{q}}a_{r} \overline{a_{s}} \Big)\\
 &=& - 2 \Im  \sum_{\substack{p \in \Z^{d}\\ \vert p \vert^{2} = N}} \sum_{\substack{(q,r,s),\\ (p,q,r,s) \in \Gamma_{0}}} \overline{a_{p}}a_{q} \overline{a_{r}} a_{s},
 \end{eqnarray*}
 and this last sum is real: for $p$ such that $\vert p \vert^{2}=N$, if $(q,r,s)$ are such that $(p,q,r,s) \in \Gamma_{0}$, then $\vert q \vert^{2}=N$ for example (the case $\vert s \vert^{2}=N$ is similar), and the sum also includes the term $a_{p}\overline{a_{q}}a_{r} \overline{a_{s}}$ corresponding to the rectangle $(q,p,s,r) \in \Gamma_{0}$, which gives the preservation of $A_{N}(a)$. This implies in particular that all  $h^{s}_{p}$ norms are preserved for $s\geq0$:

 \begin{equation*}
\Vert a \Vert_{h^s_p}^2:=\sum_{p\in\mathbb{Z}^d}\left[1+\vert p\vert^2\right]^s\vert a_p\vert^2 \sum_{N \geq0} \left[1+ N^2\right]^s \sum_{\substack{p \in \Z^{d}\\ \vert p\vert^{2}=N}}\vert a_p\vert^2 = \sum_{N \geq0} \left[1+ N^2\right]^s A_{N}^{2}(a),
\end{equation*}  
which completes the proof.
\end{proof}

\subsection{Estimation of solutions to the resonant system}

\begin{lemma}\label{gAdmi}
 The equation~\eqref{RSS} is wellposed for initial conditions in $Z$ or in $S$. Moreover,  for all   $N \geq 0$ and all $t\geq 0$
\begin{eqnarray} \label{Sol1}
\Vert G( t)\Vert_Z&=&\Vert G_0\Vert_Z,\nonumber\\
\Vert G( t)\Vert_{H^{N}_{x,y}} &=& \Vert G_{0}\Vert_{H^{N}_{x,y}}.
\end{eqnarray}
\end{lemma}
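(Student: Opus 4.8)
The plan is to exploit the fact that the resonant system~\eqref{RSS} is nothing but the sequence system~\eqref{RS} parametrized by $\xi\in\R$: since $\mathcal{F}_{\R\times\T^d}\mathcal{R}[G,G,G](\xi,p)$ involves only the values $\widehat{G}(\xi,\cdot)$ at the \emph{same} frequency $\xi$, for each fixed $\xi$ the sequence $a^\xi_p(\tau):=\widehat{G}(\tau)(\xi,p)$ solves exactly~\eqref{RS}, and distinct values of $\xi$ evolve independently. Thus every assertion reduces, fiberwise in $\xi$, to the theory of~\eqref{RS} established above, and the only work is to assemble the fiberwise information into the norms $Z$, $S$ and $H^N_{x,y}$.

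For well-posedness in $Z$ I would first record the trilinear bound $\Vert\mathcal{R}[F,G,H]\Vert_{Z}\lesssim\Vert F\Vert_{Z}\Vert G\Vert_{Z}\Vert H\Vert_{Z}$. This follows by applying estimate~\eqref{StriEst} with $s=1$ to the sequences $\widehat{F}(\xi,\cdot),\widehat{G}(\xi,\cdot),\widehat{H}(\xi,\cdot)$ at each $\xi$, bounding $\Vert\widehat{F}(\xi,\cdot)\Vert_{h^1_p}\le(1+|\xi|^2)^{-1}\Vert F\Vert_{Z}$ and likewise for $G,H$, and taking the supremum in $\xi$ (the resulting weight $(1+|\xi|^2)^{-2}$ is maximized at $\xi=0$). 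A standard contraction argument then gives local well-posedness in $Z$, which the conserved $Z$-norm proved below upgrades to a global flow. For data in $S$ one argues similarly: the Sobolev component is controlled by its conservation law, while the weighted component $\Vert xG\Vert_{L^2_{x,y}}$, which corresponds to $\partial_\xi\widehat{G}$, is propagated by differentiating~\eqref{RSS} in $\xi$; since the nonlinearity is pointwise in $\xi$, the function $\partial_\xi\widehat{G}$ solves the linearization of~\eqref{RSS} around $G$, and Lemma~\ref{LemDST} together with the conserved control of $G$ in $Z$ gives, via Gronwall, a bound on $\Vert xG(\tau)\Vert_{L^2_{x,y}}$ that is finite on every finite time interval, ruling out blow-up and yielding a global $S$-flow.

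For the conservation laws I would invoke the fiberwise shell-energy conservation: for each fixed $\xi$, all the quantities $A_N(a^\xi)=\big(\sum_{|p|^2=N}|a^\xi_p|^2\big)^{1/2}$ are preserved, hence so is every weighted norm $\sum_p w(|p|)|a^\xi_p(\tau)|^2=\sum_N w(\sqrt N)A_N^2(a^\xi)$. Taking $w(|p|)=1+|p|^2$ and multiplying by $(1+|\xi|^2)^2$ gives, for every $\xi$, the identity $(1+|\xi|^2)^2\Vert\widehat{G}(\tau)(\xi,\cdot)\Vert_{h^1_p}^2=(1+|\xi|^2)^2\Vert\widehat{G_0}(\xi,\cdot)\Vert_{h^1_p}^2$, and taking the supremum over $\xi$ yields $\Vert G(t)\Vert_{Z}=\Vert G_0\Vert_{Z}$. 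For the Sobolev norm I expand the coupling weight $(1+|\xi|^2+|p|^2)^N=\sum_{k=0}^N\binom{N}{k}(1+|\xi|^2)^{N-k}|p|^{2k}$; for each $k$ the fiber quantity $\sum_p|p|^{2k}|a^\xi_p(\tau)|^2$ is conserved by the shell-energy argument, so $\sum_p(1+|\xi|^2+|p|^2)^N|\widehat{G}(\tau)(\xi,p)|^2$ is conserved for each $\xi$, and integrating in $\xi$ gives $\Vert G(t)\Vert_{H^N_{x,y}}=\Vert G_0\Vert_{H^N_{x,y}}$.

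I expect the only genuinely delicate point to be the global-in-time control of the weighted part $\Vert xG\Vert_{L^2_{x,y}}$ in the $S$-well-posedness, since multiplication by $x$ does not commute with $\mathcal{R}$ and is the one quantity not governed by a conservation law; everything else follows mechanically from the fiberwise reduction to~\eqref{RS} and the already-proven trilinear and shell-energy results. A minor measurability-and-continuity-in-$\xi$ verification is also required to ensure that the fiberwise flow assembles into a genuine $Z$- (resp. $S$-) valued curve, but this is routine given the continuous dependence of~\eqref{RS} on its initial data.
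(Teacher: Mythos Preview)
Your proposal is correct and follows essentially the same route as the paper: the paper's proof is a two-line reference to \cite[Lemma~4.3]{HPTV} for existence and to the fiberwise first integrals for the conservation identities, and you have simply unpacked those references. Your explicit use of the shell energies $A_N(a^\xi)$ to obtain the $H^N_{x,y}$ conservation is in fact more precise than the paper's citation of only \eqref{conservmass}--\eqref{conservener}, and your Gronwall argument for the weighted piece $\Vert xG\Vert_{L^2_{x,y}}$ is exactly the content hidden behind the reference to \cite{HPTV}.
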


\begin{proof} 
The existence proof follows the lines of~\cite[Lemma 4.3]{HPTV}. The two  equalities in~\eqref{Sol1} directly follow from the first integrals~\eqref{conservmass} and~\eqref{conservener}, the dependance in $\xi$ being parametric in the system~\eqref{RSS}.
\end{proof}

The  result of Corollary\;\ref{coro14} is a direct consequence of \eqref{Sol1}.
 

\end{document}